\DeclareMathOperator{\characteristic}{char}
\newtheorem{theorem}{Theorem}[section]
\newtheorem{proposition}[theorem]{Proposition}
\newtheorem{corollary}[theorem]{Corollary}
\theoremstyle{definition}
\theoremstyle{remark}
\numberwithin{equation}{section}
\begin{document}

\title{The $F$-regularity of algebraic sets related to commutator matrices}

\author{Trung Chau}
\address{Department of Mathematics, University of Utah, 155 South 1400 East, Salt Lake City, UT~84112, USA}
\email{trung.chau@utah.edu}

\begin{abstract}
We study the algebraic sets of pairs of matrices defined by the vanishing of the anti-diagonal as well as the cross-diagonal of their commutator matrix. We prove that, over a field of prime characterisitic, the coordinate ring of the latter is always $F$-regular and, with exactly one exception, so is that of the former, thus proving a conjecture of Kadyrsizova and Yerlanov.
\end{abstract}
\maketitle

\section{Introduction}

Let $X=(x_{ij})_{1\leq i,j \leq n}$ and $Y=(y_{ij})_{1\leq i,j \leq n}$ be $n\times n$ matrices of indeterminates over a field ~$\Bbbk$. The \textit{commutator matrix} of $X$ and $Y$ is $C\coloneqq XY-YX=(c_{ij})_{1\leq i,j \leq n}$. Let $\mathfrak{d}$ and $\mathfrak{a}$ denote the ideals generated by the diagonal and the anti-diagonal entries of $C$, respectively. Then $\mathfrak{c}\coloneqq \mathfrak{d} +  \mathfrak{a}$ is the ideal generated by the \textit{cross-diagonal} of $C$.



Algebraic sets defined by commutator matrices have been studied extensively since at least the 1950s. Such algebraic sets were proved to be irreducible by Motzkin and Taussky \cite{MT55}, Gerstenhaber \cite{Ge61}, and Guralnick \cite{Gu92} under varying assumptions. Hochster (1984) conjectured that these algebraic sets are reduced. It is straightforward to verify this when $n\leq 2$. Soon after, Thompson \cite{Th85} settled the conjecture in the case $n=3$; the conjecture remains open for $n\geq 4$.  Knutson \cite{Kn05} introduced the algebraic set of \textit{nearly commuting matrices}, i.e., the set defined by the vanishing of all off-diagonal entries of $C$, and conjectured that it is a reduced complete intersection, with two irreducible components, one of which is the variety of commuting $n\times n$ matrices. This was proved by Young \cite{Yo21} who also studied the algebraic set of matrices with zero diagonal commutator, i.e., the set defined by the vanishing of  $\mathfrak{d}$, and showed that it is an irreducible complete intersection. 

Since the introduction of tight closure \cite{HH90}, it has been a question of considerable interest to determine the $F$-singularities, i.e., the tight closure properties, of various algebraic sets. The classic examples that have been studied extensively are determinantal, symmetric determinantal, and Pfaffian rings, which are all known to be $F$-regular, see for example \cite[Theorem 7.14]{HH94b}. How about coordinate rings defined by the aforementioned algebraic sets related to commutator matrices? Partial answers were recently obtained by Kadyrsizova \cite{Ka18} and Kadyrsizova and Yerlanov \cite{KY22}:

\begin{theorem}\label{Fsingularities}
    Let $\Bbbk$ be a field of positive characteristic, $X$ and $Y$  be $n\times n$ matrices of indeterminates, $C=(c_{ij})_{1\leq i,j \leq n}$ their commutator matrix,   and $\mathfrak{d}$, $\mathfrak{a}$, and $\mathfrak{c}$ are as defined above. Then:
    \begin{enumerate}
        \item \cite[Theorem 6]{Ka18} The ring $\Bbbk[X,Y]/(c_{ij})_{1\leq i,j\leq n, ~i\neq j}$ is $F$-pure in the cases $n\leq 3$.
        \item \cite[page 30]{KY22} The ring $\Bbbk[X,Y]/(c_{ij})_{1\leq i,j\leq n, ~i\neq j}$ is not $F$-pure when $n=4$ and $\characteristic(\Bbbk)=2$.
        \item \cite[Theorem 2.3]{KY22} The ring $\Bbbk[X,Y]/\mathfrak{d}$ is $F$-regular.
        \item \cite[Theorems 3.7, 3.9, 4.3]{KY22} The rings $\Bbbk[X,Y]/\mathfrak{c}$ and $\Bbbk[X,Y]/\mathfrak{a}$ are $F$-pure.
    \end{enumerate}
\end{theorem}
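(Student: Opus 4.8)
The plan is to establish the four assertions with two standard tools: \emph{Fedder's criterion} for the $F$-purity statements (1), (2), (4), and the complete-intersection structure together with a reduction of $F$-regularity to $F$-rationality for statement (3). Throughout write $S=\Bbbk[X,Y]$, a polynomial ring in $2n^2$ variables with homogeneous maximal ideal $\mathfrak m$, and recall that each entry $c_{ij}=\sum_{k}(x_{ik}y_{kj}-y_{ik}x_{kj})$ is bihomogeneous of bidegree $(1,1)$.

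\emph{Items (1), (2), (4).} Let $I\subseteq S$ be the ideal in question --- the nearly-commuting ideal $(c_{ij})_{i\neq j}$ for (1)--(2), or $\mathfrak a$ or $\mathfrak c$ for (4). First I would pin down that, after discarding the redundant generators forced by the overlap of the diagonal and anti-diagonal entries and by the trace identity $\sum_i c_{ii}=0$, each of these ideals is generated by a regular sequence $g_1,\dots,g_r$ in $S$ (using Young's theorem that $(c_{ij})_{i\neq j}$ is a complete intersection, and a height count for $\mathfrak a$ and $\mathfrak c$). For a complete intersection one has the colon formula $(I^{[p]}:_S I)=I^{[p]}+\big((g_1\cdots g_r)^{p-1}\big)$, so by Fedder's criterion $S/I$ is $F$-pure precisely when $(g_1\cdots g_r)^{p-1}\notin\mathfrak m^{[p]}$, equivalently when the polynomial $g_1\cdots g_r$ has a squarefree monomial in its support. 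Unwinding the definition of $c_{ij}$, this asks for a choice, one per generator $g_t$, of an internal index $k$ and a side ($x$-first or $y$-first) so that the letters $x_{\ast,k}$ and $y_{k,\ast}$ (or the swapped pair) produced are pairwise distinct across all factors. For $n\leq 3$, and for $\mathfrak a$ and $\mathfrak c$ in every $n$, I would exhibit such a transversal explicitly: for $\mathfrak a$, read the anti-diagonal entries $c_{i,\,n+1-i}$ and alternate the side so that the indices never repeat; for $\mathfrak c$, combine this with a disjoint selection for the trimmed diagonal entries; for $(c_{ij})_{i\neq j}$ with $n\leq 3$, a short case analysis. For (2) the task is the negation: when $n=4$ and $\characteristic(\Bbbk)=2$, every monomial in $\prod_{i\neq j}c_{ij}$ has a repeated variable. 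I expect this to be the hardest point of the $F$-purity package --- it is a negative, purely combinatorial statement about transversals in the bipartite incidence structure on the $12$ ordered pairs $(i,j)$ and the $32$ letters --- and I would look first for a parity or pigeonhole invariant obstructing all such choices, falling back on a finite computer verification if no clean argument emerges.

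\emph{Item (3).} By Young's theorem $R=S/\mathfrak d$ is an irreducible complete intersection; in particular it is Cohen--Macaulay, Gorenstein, and a domain. For a Gorenstein ring essentially of finite type over a perfect field, strong $F$-regularity coincides with $F$-rationality, and $F$-rational rings are normal; so it suffices to prove $R$ is $F$-rational. I would begin by computing the singular locus of $R$ --- expected to be governed by rank conditions on $X$, $Y$, and their commutator --- and checking it has codimension at least $2$, which already yields normality via Serre's criterion. Since $F$-rationality of a positive-dimensional ring is not detected on the punctured spectrum alone (the cone over an elliptic curve is the cautionary example), the remaining step must be global: I would choose a homogeneous system of parameters adapted to the block structure of $(X,Y)$ --- for instance by specializing enough entries of $X$ to a convenient normal form --- and verify directly that the resulting parameter ideal $\mathfrak q$ is tightly closed, i.e.\ that a fixed nonzero test element carries every $\mathfrak q^{[p^e]}$ back into $\mathfrak q$. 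An appealing alternative is to degenerate $\mathfrak d$, via a Gr\"obner or toric degeneration, to a squarefree-monomial or binomial complete intersection whose $F$-regularity is transparent, and then lift; the obstacle there is keeping the degeneration a complete intersection of the correct height that is genuinely $F$-regular rather than merely $F$-pure.

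In summary, the whole statement reduces to: explicit squarefree transversals for the affirmative $F$-purity claims, a negative combinatorial fact for $n=4$ in characteristic $2$, and a global $F$-rationality verification for the complete intersection $S/\mathfrak d$; the last two are where the genuine difficulty lies.
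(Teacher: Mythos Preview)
This theorem carries no proof in the paper: it is a summary of prior results, each item attributed to \cite{Ka18} or \cite{KY22}, stated as background for the paper's own contribution (Theorem~\ref{thm:main}). So there is nothing to compare your proposal against here.

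That said, your approach to (1), (2), and (4) via Fedder's criterion and the complete-intersection colon formula $(I^{[p]}:I)=I^{[p]}+((g_1\cdots g_r)^{p-1})$ is exactly the machinery the cited papers use, and it is the same machinery the present paper deploys in its own $F$-purity steps (e.g.\ inside Proposition~\ref{thm:T}). Your sketch there is on the right track.

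Your item (3) is where the proposal does not close. You correctly reduce strong $F$-regularity to $F$-rationality via Gorensteinness, but then neither of your two routes is a proof. The first---pick a system of parameters and verify its tight closure by hand---is stated at a level of generality where no computation is actually specified; this is precisely the hard part, not a plan for it. The second---degenerate to a squarefree monomial or binomial complete intersection---you yourself flag as blocked, and indeed it is: a squarefree monomial complete intersection such as $\Bbbk[x_1,\dots,x_{2r}]/(x_1x_2,\dots,x_{2r-1}x_{2r})$ is $F$-pure but never $F$-regular (it is not even a domain), so the target of the degeneration cannot have the property you need to lift. The method actually used in \cite{KY22}, and mirrored throughout the present paper for $\mathfrak c$, is different: one exhibits an explicit test element as a Jacobian minor (via the analogue of Corollary~\ref{cor:testelement}), and then verifies Glassbrenner's criterion $c\,\omega^{q-1}\notin\mathfrak m^{[q]}$ by a direct monomial-tracking argument. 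Your proposal does not arrive at this mechanism.
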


In this paper, we  determine precisely when the rings $\Bbbk[X,Y]/\mathfrak{c}$ and $\Bbbk[X,Y]/\mathfrak{a}$ are $F$-regular, thus proving \cite[Conjectures 5.1, 5.2]{KY22}. Our main result is:

\begin{theorem}\label{thm:main}
    Let $\Bbbk$ be a field of positive characteristic, $X$ and $Y$ be $n\times n$ matrices of indeterminates, and $\mathfrak{c}$ and $\mathfrak{a}$ denote the ideals generated by cross-diagonal and anti-diagonal entries of the commutator matrix $XY-YX$, respectively.  Then  $\Bbbk[X,Y]/\mathfrak{c}$ is $F$-regular for each $n$, while $\Bbbk[X,Y]/\mathfrak{a}$ is $F$-regular  exactly when $n\neq 2$.
\end{theorem}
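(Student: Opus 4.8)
Throughout write $R$ for whichever of $\Bbbk[X,Y]/\mathfrak{c}$ and $\Bbbk[X,Y]/\mathfrak{a}$ is under discussion, set $S=\Bbbk[X,Y]$, and let $\mathfrak{m}$ be the homogeneous maximal ideal of $R$. Since $F$-regularity, $F$-rationality, and $F$-purity of a finitely generated $\mathbb{N}$-graded $\Bbbk$-algebra can be tested at $\mathfrak{m}$, it suffices to argue with $R_{\mathfrak{m}}$ (equivalently, with the graded ring). By Theorem~\ref{Fsingularities}(4), $R$ is already $F$-pure, hence $F$-injective, in every case; what must be added is strong $F$-regularity in the cases the theorem asserts it, and the identification of the obstruction in the one case where it does not. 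For $n=1$ both ideals vanish and $R$ is a polynomial ring, so assume $n\geq 2$. For the affirmative assertions the plan rests on three ingredients: (i) a structural input, namely that $R$ is Cohen--Macaulay with negative $a$-invariant; (ii) a graded criterion: if $R$ is Cohen--Macaulay, $F$-injective, has $a(R)<0$, and is $F$-rational on the punctured spectrum, then $R$ is $F$-rational; (iii) the passage from $F$-rational to $F$-regular for Gorenstein rings. The case $n=2$ is treated by hand.

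For (i), I would show that $R$ is a complete intersection: that the $n$ anti-diagonal entries of the commutator $C$ form a regular sequence, so that $\mathfrak{a}$ has codimension $n$ and $a\bigl(\Bbbk[X,Y]/\mathfrak{a}\bigr)=2n-2n^{2}<0$; and, for $n\geq 3$, that a minimal generating set of $\mathfrak{c}$ — the cross-diagonal entries of $C$ modulo the single relation $\operatorname{tr}(C)=0$ — does so as well, so that $a\bigl(\Bbbk[X,Y]/\mathfrak{c}\bigr)\leq 2(2n-1)-2n^{2}<0$. I expect these regular-sequence statements to follow from specialization arguments in the spirit of Young's proof that $\Bbbk[X,Y]/\mathfrak{d}$ is a complete intersection \cite{Yo21}, possibly by relating $R$ directly to $\Bbbk[X,Y]/\mathfrak{d}$; a by-product is that $R$ is Gorenstein, supplying (iii). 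Ingredient (ii) is the key lemma, and it is short: if $R$ is Cohen--Macaulay of dimension $d$ and $F$-rational on the punctured spectrum, then $0^{*}_{H^{d}_{\mathfrak{m}}(R)}$ has finite length; it is a graded, Frobenius-stable submodule of $H^{d}_{\mathfrak{m}}(R)$, which is concentrated in degrees $\leq a(R)<0$; the natural Frobenius action on it multiplies degrees by $p$, hence maps the (necessarily negative) lowest occupied degree of $0^{*}_{H^{d}_{\mathfrak{m}}(R)}$ into strictly smaller, hence zero, degrees — contradicting $F$-injectivity unless $0^{*}_{H^{d}_{\mathfrak{m}}(R)}=0$, i.e. unless $R$ is $F$-rational. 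Since a Gorenstein $F$-rational local ring is strongly $F$-regular, combining (i)--(iii) reduces the affirmative cases (for $n\geq 3$) to the single statement that $R_{\mathfrak{p}}$ is $F$-rational for every nonmaximal homogeneous prime $\mathfrak{p}$ of $R$.

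I would prove that last statement by induction on $n$. Both $\mathfrak{c}$ and $\mathfrak{a}$ are stable under simultaneous conjugation by the diagonal torus and under the substitutions $X\mapsto X+\lambda I$, $Y\mapsto Y+\mu I$ and independent scalings of $X$ and $Y$, so the non-$F$-rational locus of $R$ is a closed invariant subset of $\Spec R$; hence it suffices to verify $F$-rationality of $R_{\mathfrak{p}}$ for $\mathfrak{p}$ in a dense open subset of each of its irreducible components. On such a subset some entry, or some minor, of $X$ or $Y$ is invertible, and inverting it lets one eliminate variables and split off a polynomial ring, leaving a localization of a ring of the same type with smaller size, of the complete intersection $\Bbbk[X,Y]/\mathfrak{d}$, or of a generic determinantal ring — all of these known or inductively known to be strongly $F$-regular. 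Making this precise — pinning down the singular locus sharply enough, and arranging that the smaller instance that appears is never the exceptional ring $\Bbbk[X,Y]/\mathfrak{a}$ for $n=2$ — is where I expect the genuine difficulty to lie, and it is also the step at which $\mathfrak{c}$ and $\mathfrak{a}$ behave differently. (Alternatively, one could avoid the $a$-invariant bookkeeping via the criterion that a reduced $F$-finite $F$-split ring is strongly $F$-regular once some $c$ with $R_{c}$ strongly $F$-regular splits in the sense that $R\to F^{e}_{*}R$, $1\mapsto F^{e}_{*}c$, splits for some $e$; that splitting can then be analyzed by a Fedder-type computation, $R$ being a complete intersection.)

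It remains to treat $n=2$. Put $u=x_{11}-x_{22}$ and $v=y_{11}-y_{22}$; then $c_{11}=x_{12}y_{21}-x_{21}y_{12}$, $c_{12}=u\,y_{12}-v\,x_{12}$, $c_{21}=v\,x_{21}-u\,y_{21}$, and $c_{22}=-c_{11}$. Consequently, up to a polynomial extension in the two variables $x_{22}$ and $y_{22}$, the ideal $\mathfrak{c}$ is the ideal of $2\times 2$ minors of the generic matrix $\left(\begin{smallmatrix} x_{12} & x_{21} & u\\ y_{12} & y_{21} & v\end{smallmatrix}\right)$, so $\Bbbk[X,Y]/\mathfrak{c}$ is a polynomial extension of a generic determinantal ring and is therefore strongly $F$-regular. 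On the other hand $\mathfrak{a}=(u\,y_{12}-v\,x_{12},\, v\,x_{21}-u\,y_{21})$, and $V(\mathfrak{a})=\{u=v=0\}\cup V(\mathfrak{c})$: away from $\{u=v=0\}$ the two generators force $x_{12}y_{21}=x_{21}y_{12}$ and hence $XY=YX$, while the linear space $\{u=v=0\}$ is visibly not contained in the commuting variety. These are two distinct irreducible components of $V(\mathfrak{a})$, each of dimension $6$, meeting in a proper closed subset, so $\Bbbk[X,Y]/\mathfrak{a}$ has two distinct minimal primes whose vanishing loci intersect; its localization at a prime containing both is therefore not a domain, hence not strongly $F$-regular. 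Thus $\Bbbk[X,Y]/\mathfrak{a}$ is $F$-regular exactly when $n\neq 2$, which together with the previous cases yields the theorem.
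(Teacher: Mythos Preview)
Your strategy is coherent in outline but leaves the decisive step unexecuted. Reducing to $F$-rationality on the punctured spectrum via the $a$-invariant criterion is fine (and your sketch of that lemma is correct), and your handling of $n=2$ is right and in fact more explicit than the paper's, which simply cites prior work. The gap is the inductive reduction itself: you assert that after inverting ``some entry, or some minor'' one can eliminate variables and land in a localization of a ring of the same type with smaller $n$, of $\Bbbk[X,Y]/\mathfrak{d}$, or of a determinantal ring, but you give no mechanism for this. Which element is inverted, which variables are eliminated, and why the resulting ring is one of those already known to be $F$-regular are exactly the content of the theorem; the torus and translation symmetries you invoke are not by themselves enough to produce such a reduction, and you must in particular ensure you never land in the exceptional $B_2$. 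You flag this yourself as ``where I expect the genuine difficulty to lie,'' and it is. (The complete-intersection input you label (i) is already in \cite{KY22}, so that part is not a gap.)

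The paper avoids the punctured-spectrum analysis altogether by going in the opposite direction: rather than localize, it passes to a quotient. It exhibits an explicit subset $\Omega$ of the variables $x_{ij},y_{ij}$ that forms a regular sequence on $A_n$ and such that $A_n/(\Omega)$ is a tensor product of $A_{n-1}$ (for $n$ odd) or $A_{n-2}$ (for $n$ even) with a small explicit $F$-regular factor---the hypersurface $\Bbbk[w_1,\dots,w_4]/(w_1w_4-w_2w_3)$ in the odd case, and a specific complete intersection $T'$ (a direct summand of an $18$-variable ring $T$ handled by hand) in the even case. Since $F$-regularity deforms for Gorenstein rings, this closes the induction cleanly. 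The base cases $A_3$, $A_4$, and the auxiliary ring $T$ are dispatched by explicit Glassbrenner-type computations with a concrete test element---precisely the ``alternative'' you mention in your last parenthetical but do not carry out. The claim for $B_n$ with $n\ge 3$ then follows because $A_n$ is $B_n$ modulo a regular sequence.
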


Using \cite[Theorem 4.3]{Sm97}, one immediately obtains the characteristic-zero version:

\begin{corollary}
    Let $\Bbbk$ be a field of characteristic zero, $X$ and $Y$ be $n\times n$ matrices of indeterminates, and $\mathfrak{c}$ and $\mathfrak{a}$ denote the ideals generated by cross-diagonal and anti-diagonal entries of the commutator matrix $XY-YX$, respectively.  Then $\Bbbk[X,Y]/\mathfrak{c}$ has rational singularities for each $n$, while $\Bbbk[X,Y]/\mathfrak{a}$ has rational singularities  exactly when $n\neq 2$.
\end{corollary}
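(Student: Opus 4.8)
The corollary follows from Theorem~\ref{thm:main}: by \cite[Theorem 4.3]{Sm97}, a finitely generated algebra over a field of characteristic zero that has $F$-regular type — in particular, $F$-rational type — has rational singularities, and for $\Bbbk[X,Y]/\mathfrak c$ (every $n$) and $\Bbbk[X,Y]/\mathfrak a$ ($n\neq2$) the reductions modulo almost all primes are exactly the corresponding positive-characteristic rings, which are $F$-regular by Theorem~\ref{thm:main}. For $n=2$, $\Bbbk[X,Y]/\mathfrak a$ is not normal — it has two distinct minimal primes of the same dimension, as explained below — and a non-normal variety cannot have rational singularities. So the real content is Theorem~\ref{thm:main}, which I would prove by treating small $n$ explicitly and general $n$ by reducing to rings of already-known $F$-regularity.

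For $n=1$ the ring is polynomial. For $n=2$ I would perform the linear change of variables $u=x_{11}-x_{22}$, $v=y_{11}-y_{22}$, retaining $x_{11},y_{11}$ as coordinates; no cross-diagonal entry of the commutator then involves $x_{11}$ or $y_{11}$, and a short computation identifies the cross-diagonal entries with the three $2\times2$ minors of the generic matrix $\left(\begin{smallmatrix} u & v\\ x_{12}&y_{12}\\ x_{21}&y_{21}\end{smallmatrix}\right)$. Hence $\Bbbk[X,Y]/\mathfrak c$ is a polynomial extension of a generic determinantal ring, $F$-regular by \cite[Theorem 7.14]{HH94b}. The anti-diagonal entries are only two of these minors, with $x_{12}y_{21}-x_{21}y_{12}$ omitted; the resulting ideal has the two minimal primes $(u,v)$ and the prime of the closure of the locus $\{u\neq 0\}$, both of codimension two, so $\Bbbk[X,Y]/\mathfrak a$ is not a domain and hence not strongly $F$-regular. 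This yields the asserted exception (it is still $F$-pure, consistently with Theorem~\ref{Fsingularities}(4)).

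For general $n$ I would first establish that $\Bbbk[X,Y]/\mathfrak c$ (every $n$) and $\Bbbk[X,Y]/\mathfrak a$ ($n\geq3$) are Cohen--Macaulay normal domains: I expect the $n$ anti-diagonal entries to form a regular sequence, making $\Bbbk[X,Y]/\mathfrak a$ a complete intersection, with normality and primeness verified by the Jacobian criterion on the explicit bilinear entries $c_{ij}$ and the description of the zero-diagonal-commutator locus in \cite{Yo21}. Since these rings are $F$-finite and standard graded, weak and strong $F$-regularity coincide, so either suffices. I would then reduce each ring — by hyperplane sections along homogeneous nonzerodivisors that specialize the $c_{ij}$, or by a flat degeneration preserving integrality — to a base case that is visibly $F$-regular: a polynomial ring, a generic determinantal ring, or $\Bbbk[X,Y]/\mathfrak d$, which is $F$-regular by Theorem~\ref{Fsingularities}(3). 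For $\mathfrak a$ with $n\geq3$ this should be clean: the ring is a complete intersection, hence Gorenstein, so the reduction yields $F$-rationality by the deformation theorem for $F$-rationality, and $F$-rational plus Gorenstein gives $F$-regular. For $\mathfrak c$, which is not a complete intersection, I would instead aim to identify $\Bbbk[X,Y]/\mathfrak c$ — after a change of variables as above — with an $F$-regular ring of determinantal or ladder type, or to prove strong $F$-regularity directly by exhibiting, for a test element $c$ whose inversion makes the ring a localized polynomial ring, a splitting $F^e_*R\to R$ sending $F^e_* c$ to $1$.

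The main obstacle is this last step for $\mathfrak c$. As $\Bbbk[X,Y]/\mathfrak c$ is not Gorenstein — already for $n=2$ it is a non-Gorenstein determinantal ring — $F$-rationality cannot be upgraded to $F$-regularity for free, and the deformation theorem for \emph{strong} $F$-regularity is not available in the generality one would want; so the crux is to pin down the correct identification with a standard $F$-regular family or to carry out a hands-on construction of Frobenius splittings adapted to the bilinear shape of the commutator entries. Once strong $F$-regularity is proved in all remaining cases, both Theorem~\ref{thm:main} and the corollary follow.
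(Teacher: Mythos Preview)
Your deduction of the corollary from Theorem~\ref{thm:main} via \cite[Theorem~4.3]{Sm97}, together with the non-normality argument for $\mathfrak a$ when $n=2$, is correct and is exactly what the paper does. The substantive part of your proposal is the sketch of Theorem~\ref{thm:main} itself, and there a genuine misconception derails the plan.

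You assert that $\Bbbk[X,Y]/\mathfrak c$ is not a complete intersection and not Gorenstein, extrapolating from your (correct) determinantal description at $n=2$. This is false for $n\geq 3$: by \cite[Theorem~3.4]{KY22} the chosen generators of $\mathfrak c$ form a regular sequence, so $A_n=\Bbbk[X,Y]/\mathfrak c$ is a complete intersection, hence Gorenstein. The obstacle you worry about---upgrading $F$-rationality to strong $F$-regularity in a non-Gorenstein ring, or building splittings by hand---therefore never arises, and your proposed detours through ladder determinantal identifications or ad hoc Frobenius splittings are unnecessary. The paper exploits the Gorenstein property directly: it verifies $F$-regularity of $A_3$, $A_4$, and an auxiliary $18$-dimensional complete intersection $T$ by Glassbrenner's criterion with an explicit Jacobian-minor test element, and then for larger $n$ kills a carefully chosen set $\Omega$ of variables so that $A_n/(\Omega)$ is a tensor product of $A_{n-1}$ or $A_{n-2}$ with an already-known $F$-regular ring; since $\Omega$ is a regular sequence on the Gorenstein ring $A_n$, $F$-regularity deforms by Theorem~\ref{thm:fregdeform}.

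The logical flow between $\mathfrak a$ and $\mathfrak c$ is also the reverse of what you propose. You plan to settle $B_n=\Bbbk[X,Y]/\mathfrak a$ first and then attack $A_n$ separately; the paper instead proves $A_n$ is $F$-regular and observes that $A_n\cong B_n/(c_{11},\dots)$ with the extra diagonal entries forming a regular sequence on $B_n$, so $F$-regularity of $B_n$ for $n\geq 3$ follows immediately from Theorem~\ref{thm:fregdeform}. In short, the key fact you are missing is that $\mathfrak c$ defines a complete intersection once $n\geq 3$; with that in hand, your general strategy of reducing by regular sequences is the right one, and the hard work lies in the explicit base cases rather than in any structural subtlety about $F$-regularity versus $F$-rationality.
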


\begin{corollary}
    In any characteristic, the ideal $\mathfrak{c}$ is prime for each $n$, while the ideal $\mathfrak{a}$ is prime exactly when $n\neq 2$.
\end{corollary}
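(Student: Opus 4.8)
The plan is to separate three cases: (i) the ideal $\mathfrak{c}$ for all $n$, together with $\mathfrak{a}$ for $n\neq 2$, over a field of positive characteristic; (ii) the same ideals over a field of characteristic $0$; and (iii) the ideal $\mathfrak{a}$ for $n=2$, over a field of arbitrary characteristic. Cases (i) and (ii) will be handled uniformly once one observes that in each of them the relevant quotient is a \emph{connected} ring that is moreover \emph{normal}. Connectedness is immediate: since every $c_{ij}$ is homogeneous of degree $2$ in the grading $\deg x_{ij}=\deg y_{ij}=1$, both $\mathfrak{c}$ and $\mathfrak{a}$ are homogeneous, so $\Bbbk[X,Y]/\mathfrak{c}$ and $\Bbbk[X,Y]/\mathfrak{a}$ are $\mathbb{N}$-graded $\Bbbk$-algebras with degree-zero component $\Bbbk$, hence have no idempotents besides $0$ and $1$. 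A Noetherian normal ring is a finite direct product of normal domains, and connectedness forces exactly one factor; so once normality is in hand, the quotient is a domain and the defining ideal is prime.

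For case (i) I would quote Theorem~\ref{thm:main}: these rings are $F$-regular, and $F$-regular rings are normal. For case (ii) I would quote the corollary just proved: over a field of characteristic $0$ these rings have rational singularities, hence are in particular normal. (Alternatively, case (ii) can be deduced from case (i) by a spreading-out argument: the set of primes of $\mathbb{Z}$ over which the fibre of $\mathbb{Z}[X,Y]/\mathfrak{c}$, resp.\ of $\mathbb{Z}[X,Y]/\mathfrak{a}$, is geometrically integral is constructible, and by Theorem~\ref{thm:main} applied over $\overline{\mathbb{F}_p}$ it contains every closed point of $\Spec\mathbb{Z}$; since a constructible subset of $\Spec\mathbb{Z}$ that meets infinitely many closed points must contain the generic point, the generic fibre is geometrically integral, and $\mathfrak{c}$, resp.\ $\mathfrak{a}$, stays prime over every field of characteristic $0$.)

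For case (iii) I would argue directly that $\mathfrak{a}=(c_{12},c_{21})$ is not prime when $n=2$. Computing $c_{12}=y_{12}(x_{11}-x_{22})-x_{12}(y_{11}-y_{22})$ and $c_{21}=x_{21}(y_{11}-y_{22})-y_{21}(x_{11}-x_{22})$, one gets the identity $x_{21}c_{12}+x_{12}c_{21}=(x_{21}y_{12}-x_{12}y_{21})(x_{11}-x_{22})$, so the product on the right lies in $\mathfrak{a}$. Neither factor does: $x_{11}-x_{22}$ has degree $1$ while $\mathfrak{a}$ is generated in degree $2$, and $x_{21}y_{12}-x_{12}y_{21}$ is homogeneous of degree $2$ whose two monomials occur in neither $c_{12}$ nor $c_{21}$, hence it is not a $\Bbbk$-linear combination of the two generators. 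So $\Bbbk[X,Y]/\mathfrak{a}$ contains a pair of nonzero elements with zero product when $n=2$, in every characteristic, and $\mathfrak{a}$ is not prime. All of the genuine difficulty is already contained in Theorem~\ref{thm:main}; within the present argument the only points that need attention are this explicit zero-divisor and---should one prefer the self-contained route in characteristic $0$---the elementary constructibility bookkeeping over $\Spec\mathbb{Z}$, and neither of these is an essential obstacle.
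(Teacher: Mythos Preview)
Your proposal is correct and follows essentially the same route the paper intends: the corollary is stated without proof, as an immediate consequence of Theorem~\ref{thm:main} and the rational-singularities corollary (normal plus connected graded implies domain), with the $n=2$ failure for $\mathfrak{a}$ attributed to \cite[page~202]{Ka18}. Your only addition is the explicit zero-divisor $(x_{21}y_{12}-x_{12}y_{21})(x_{11}-x_{22})\in\mathfrak{a}$, which is a nice self-contained replacement for that citation.
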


\section{Preliminaries}

We recall the relevant definitions. Let $R$ be a Noetherian commutative ring of prime characteristic $p>0$ and $R^\circ$ denote the complement of all minimal primes of $R$. The ring $R$ is \textit{strongly $F$-regular} if for each $c\in R^\circ$, there exists $q=p^e$ such that the $R$-linear map $R\to R^{1/q},\ 1\mapsto c^{1/q}$ splits. There are different notions of $F$-regularity, namely weak $F$-regularity, $F$-regularity, and strong $F$-regularity. For an $\mathbb{N}$-graded ring, these  coincide, so we do not make a distinction, and will only use \textit{$F$-regular}; the equivalence was proved by Lyubeznik and Smith \cite[Corollaries 4.3, 4.4]{LS99} if the ring is \textit{$F$-finite}, i.e., the embedding $R\to R^{1/p}$ is module-finite, and by Hochster \cite[page 237]{Ho07} in general. From now on, unless otherwise stated, we will assume that $R$ is $\mathbb{N}$-graded.

A weaker condition than $F$-regularity is $F$-purity. A ring $R$ is \textit{$F$-pure} if the embedding $R\to R^{1/p}$ is pure, i.e., the induced map
    \[ R\otimes M \to R^{1/p}\otimes M \]
    is injective for any $R$-module $M$.
    
Although these two properties do not deform in general (\cite[Theorem 1.1]{Si99} and \cite[Example ~4.8]{Fe83}), they do deform for Gorenstein rings:

\begin{theorem}\label{thm:fregdeform}(\cite[Theorem 4.2]{HH94a}, \cite[Theorem 3.4]{Fe83})
    Let $f$ be an $R$-regular element. If $R$ is Gorenstein and $R/fR$ is $F$-regular (respectively, $F$-pure), then $R$ is $F$-regular (respectively, $F$-pure).
\end{theorem}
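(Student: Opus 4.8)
The plan is to recall the two halves of this classical statement separately. By the standard reductions --- $F$-purity and $F$-regularity are detected at the relevant homogeneous maximal ideal (and, after completing, $R$ may be taken $F$-finite local), while $R/fR$ is again Gorenstein since $R$ is and $f$ is a nonzerodivisor --- I may assume $(R,\mathfrak m)$ is an $F$-finite Gorenstein local ring of dimension $d$ and $f\in\mathfrak m$ a nonzerodivisor, so that $R/fR$ is Gorenstein of dimension $d-1$. Recall also that in this setting the various notions of $F$-regularity agree, so for that half it suffices to treat weak $F$-regularity.

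For the $F$-pure assertion I would argue via the Frobenius trace map. Since $R$ is Gorenstein and $F$-finite, $\operatorname{Hom}_R(F^e_*R,R)$ is a free $F^e_*R$-module of rank one for every $e$; fixing a generator $\Phi^e$, the ring $R$ is $F$-pure exactly when $\Phi^e$ is surjective for some $e$. The input that needs checking is the behavior under reduction by $f$: a generator of $\operatorname{Hom}_{R/fR}(F^e_*(R/fR),R/fR)$ is the map induced by $\Phi^e\bigl(F^e_*(f^{p^e-1}\cdot{-})\bigr)$, which follows from $\omega_{R/fR}\cong\omega_R/f\omega_R$ together with the adjunction for the hypersurface $f$. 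Granting this, if $R/fR$ is $F$-pure then for some $e$ the induced map is onto $R/fR$, so the $R$-linear map $\Phi^e\bigl(F^e_*(f^{p^e-1}\cdot{-})\bigr)\colon F^e_*R\to R$ has image an ideal $J$ with $J+fR=R$; Nakayama then gives $J=R$, and since $J\subseteq\operatorname{image}\Phi^e$ the map $\Phi^e$ is onto, that is, $R$ is $F$-pure. (Without $F$-finiteness one runs the same computation through Fedder's criterion $I^{[q]}\!:\!I\not\subseteq\mathfrak m^{[q]}$, writing $R=S/I$ with $S$ regular, using that for a Gorenstein ideal the colon $I^{[q]}\!:\!I$ is principal modulo $I^{[q]}$ and that multiplying such a generator by $f^{\,q-1}$ yields a generator of $(I,f)^{[q]}\!:\!(I,f)$ modulo $(I,f)^{[q]}$.)

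For the $F$-regular assertion I would pass through $F$-rationality. For a Gorenstein local ring, weak $F$-regularity coincides with $F$-rationality --- one inclusion is trivial, and the reverse, that Gorenstein plus $F$-rational is weakly $F$-regular, is standard --- so it suffices to prove that $F$-rationality deforms and then apply this equivalence to $R$. Here $R$ is Cohen--Macaulay because $R/fR$ is and $f$ is a nonzerodivisor, so from $0\to R\xrightarrow{\,f\,}R\to R/fR\to0$ together with $H^{d-1}_\mathfrak m(R)=0$ one gets a Frobenius-compatible isomorphism $H^{d-1}_\mathfrak m(R/fR)\cong(0:_{H^d_\mathfrak m(R)}f)$. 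If $R$ were not $F$-rational, then, being Cohen--Macaulay, $H^d_\mathfrak m(R)$ would contain a proper nonzero Frobenius-stable submodule $N$; then $(0:_Nf)$ is a Frobenius-stable submodule of $H^{d-1}_\mathfrak m(R/fR)$, it is nonzero because $H^d_\mathfrak m(R)$ is Artinian and $f\in\mathfrak m$, and --- the crux --- one shows it is a \emph{proper} submodule of $H^{d-1}_\mathfrak m(R/fR)$, contradicting the $F$-rationality of $R/fR$.

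The step I expect to be the main obstacle is exactly this last properness claim: ruling out that a proper Frobenius-stable submodule of $H^d_\mathfrak m(R)$ contains the whole $f$-torsion $(0:_{H^d_\mathfrak m(R)}f)$. This is where the Gorenstein hypothesis (or at least the unmixedness and equidimensionality of $R$) enters essentially, and it is the technical heart of the deformation statement; by comparison the $F$-pure deformation sketched above is soft.
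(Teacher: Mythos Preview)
The paper does not prove this statement at all: Theorem~\ref{thm:fregdeform} is quoted as background from \cite[Theorem~4.2]{HH94a} and \cite[Theorem~3.4]{Fe83}, with no argument given. So there is nothing in the paper to compare your sketch against.

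That said, your outline is essentially the standard proof one finds in those references. The $F$-pure half is exactly Fedder's argument, phrased in the (equivalent, for $F$-finite rings) language of the trace map $\Phi^e$ and the adjunction identity $\Phi^e\!\bigl(F^e_*(f^{\,p^e-1}\cdot{-})\bigr)$ generating $\operatorname{Hom}_{R/fR}(F^e_*(R/fR),R/fR)$; the Nakayama step is clean and correct. The $F$-regular half via $F$-rationality and the Frobenius action on $H^d_{\mathfrak m}(R)$ is likewise the Hochster--Huneke route. One small imprecision: the Gorenstein hypothesis is \emph{not} what makes the properness of $(0:_N f)$ go through---deformation of $F$-rationality holds for Cohen--Macaulay rings without any Gorenstein assumption (the argument only needs a system of parameters and that $f$ is regular on $R/(x_1^q,\dots,x_{d-1}^q)$). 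Where Gorenstein is genuinely used is in the \emph{last} step, passing from ``$R$ is $F$-rational'' back to ``$R$ is weakly $F$-regular''. Your hedge ``or at least unmixedness and equidimensionality'' is closer to the truth for the local cohomology computation, but it is worth relocating the Gorenstein hypothesis to where it actually bites.
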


Useful ways to check $F$-regularity and $F$-purity are Glassbrenner's and Fedder's criteria, recorded below in the graded case. For an ideal $I$ and a prime power $q=p^e$, let $I^{[q]}$ denote the ideal generated by the $q^{\text{th}}$ powers of the elements in $I$.

\begin{theorem}(Glassbrenner's criterion, \cite[Theorem 3.1]{Gl96})
    Let $S$ be a polynomial ring over an $F$-finite field of prime characteristic $p>0$ and $\mathfrak{m}$ denote its homogeneous maximal ideal. If $I$ is a homogeneous ideal, then $S/I$ is $F$-regular if and only if for each $c\in (S/I)^\circ$, we have $c(I^{[q]}:_S I)\nsubseteq \mathfrak{m}^{[q]}$ for some $q=p^e$.
\end{theorem}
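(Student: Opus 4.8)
The plan is to deduce the criterion from the definition of strong $F$-regularity---which coincides with $F$-regularity here, since $R\coloneqq S/I$ is $\mathbb{N}$-graded and $F$-finite (the cited results of Lyubeznik--Smith and Hochster)---together with Fedder's description of the modules $\operatorname{Hom}_S(S^{1/q},S)$ over the regular ring $S$. Recall that $R$ is strongly $F$-regular precisely when, for each $c\in R^\circ$, there is $q=p^e$ for which the $R$-linear map $R\to R^{1/q}$, $1\mapsto c^{1/q}$, splits. Write $\mathfrak{m}_R=\mathfrak{m}/I$ for the homogeneous maximal ideal of $R$; we may assume $I$ is proper, so $R_0=\Bbbk$.

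The first and most substantial ingredient is Fedder's $\operatorname{Hom}$-identification, which I would carry out following \cite{Fe83}. Fix a generator $T$ of the rank-one free $S^{1/q}$-module $\operatorname{Hom}_S(S^{1/q},S)$. Using that $S^{1/q}$ is a finite free $S$-module, together with the basic lemma that for an ideal $L\subseteq S$ one has $T(L^{1/q})\subseteq I$ if and only if $L\subseteq I^{[q]}$ (where $L^{1/q}$ denotes the ideal of $S^{1/q}$ generated by $q$-th roots of elements of $L$), one obtains a natural isomorphism $\operatorname{Hom}_R(R^{1/q},R)\cong (I^{[q]}:_SI)/I^{[q]}$, under which the class of $h$ corresponds to the map $\phi_h$ induced by $h^{1/q}T$. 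Chasing the evaluation map, $\phi_h$ sends $c^{1/q}$ to the class in $R$ of $T((\tilde{c}h)^{1/q})$, where $\tilde{c}\in S$ is any lift of $c$; this is independent of the lift, because altering $\tilde{c}$ by an element of $I$ alters $\tilde{c}h$ by an element of $I\,(I^{[q]}:_SI)\subseteq I^{[q]}$, whose $q$-th roots $T$ carries into $I$. I expect this step to be the main obstacle, in part because $\Bbbk$ is only assumed $F$-finite, so $S^{1/q}$ acquires a $\Bbbk^{1/q}$-factor and the generator $T$ must be chosen accordingly; once $T$ and the lemma governing $T(L^{1/q})$ are in place, the rest is formal.

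Granting this, the criterion follows. For the ``only if'' direction: if $R$ is $F$-regular and $c\in R^\circ$, choose $q$ and $\phi$ with $\phi(c^{1/q})=1$; writing $\phi=\phi_h$ gives $T((\tilde{c}h)^{1/q})\equiv 1\pmod I$, hence $T((\tilde{c}h)^{1/q})\notin\mathfrak{m}$, hence $\tilde{c}h\notin\mathfrak{m}^{[q]}$ by the same lemma applied with $\mathfrak{m}$ in place of $I$, so $c\,(I^{[q]}:_SI)\nsubseteq\mathfrak{m}^{[q]}$. For the ``if'' direction I would first invoke the standard reduction that strong $F$-regularity of a graded ring is tested on \emph{homogeneous} $c\in R^\circ$ (one chooses a homogeneous $c_0\in R^\circ$ at which $R$ is regular---possible since the singular locus is cut out by a homogeneous ideal avoiding the minimal primes---and applies the Hochster--Huneke descent from $R_{c_0}$). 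So let $c$ be homogeneous with homogeneous lift $\tilde c$, and suppose $\tilde{c}\,(I^{[q]}:_SI)\nsubseteq\mathfrak{m}^{[q]}$; pick a homogeneous $h_0\in(I^{[q]}:_SI)$ with $\tilde{c}h_0\notin\mathfrak{m}^{[q]}$ and a monomial $\lambda x^{a}$ occurring in $\tilde{c}h_0$ with $\lambda\ne 0$ and all exponents $a_i\le q-1$. Then $h\coloneqq x^{(q-1,\dots,q-1)-a}h_0$ still lies in the ideal $(I^{[q]}:_SI)$ and $\tilde{c}h$ is homogeneous with a nonzero coefficient on the product of the $(q-1)$-st powers of all variables of $S$; the key point is that, because $\sum a_i$ equals the degree of the homogeneous polynomial $\tilde{c}h_0$, the degree of $\tilde{c}h$ comes out to be exactly that of this top monomial, which forces the homogeneous element $T((\tilde{c}h)^{1/q})$ into $S_0=\Bbbk$ and, by the coefficient just produced, makes it nonzero. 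Hence $\phi_h(c^{1/q})$ is a unit of $R$, and rescaling $\phi_h$ by its inverse yields the required splitting.

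Combining these with the equivalence of $F$-regularity and strong $F$-regularity recalled above finishes the proof. To summarize the difficulty: the genuinely delicate part is the Fedder-type identification of $\operatorname{Hom}_R(R^{1/q},R)$ over an $F$-finite (rather than perfect) field, including the explicit generator $T$ and the lemma on $T(L^{1/q})$; the remaining ingredients---the reduction to homogeneous $c$, the graded degree bookkeeping that allows the rescaling, and the passage between $T((\tilde{c}h)^{1/q})\notin\mathfrak{m}$ and $\tilde{c}h\notin\mathfrak{m}^{[q]}$---are standard.
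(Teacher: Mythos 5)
The paper offers no proof of this statement (it is quoted from Glassbrenner's paper), so your argument must stand on its own; its architecture is the standard one. Most of it is sound: the identification $\operatorname{Hom}_R(R^{1/q},R)\cong (I^{[q]}:_S I)/I^{[q]}$ via a generator $T$ of $\operatorname{Hom}_S(S^{1/q},S)$, the well-definedness of $\phi_h(c^{1/q})$, the ``only if'' direction, and the reduction of the ``if'' direction to a single homogeneous $c_0\in R^\circ$ with $R_{c_0}$ regular, followed by Hochster--Huneke descent, are all correct in outline. (One omission there: the descent theorem and the existence of such a $c_0$ need $R$ reduced; this does follow from your hypothesis applied to $c=1$, which is exactly Fedder's criterion and gives $F$-purity, but it has to be said.)

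The genuine gap is the final step of the ``if'' direction, and it is precisely where the imperfectness of $\Bbbk$ bites---not, as you predicted, in constructing $T$. Taking $T=T_\Bbbk\otimes(\text{projection onto }x^{(q-1,\dots,q-1)/q})$ with $T_\Bbbk$ a generator of $\operatorname{Hom}_\Bbbk(\Bbbk^{1/q},\Bbbk)$, your degree bookkeeping correctly shows $T((\tilde c h)^{1/q})=T_\Bbbk(\lambda^{1/q})\in\Bbbk$, where $\lambda$ is the coefficient of $x^{(q-1,\dots,q-1)}$ in $\tilde c h$. But $\lambda\neq0$ does not make this value nonzero: $T_\Bbbk$ is just some nonzero $\Bbbk$-linear functional on $\Bbbk^{1/q}$ (any nonzero one generates, since $\operatorname{Hom}_\Bbbk(\Bbbk^{1/q},\Bbbk)$ is one-dimensional over $\Bbbk^{1/q}$), and it can kill $\lambda^{1/q}$; for instance $\Bbbk=\mathbb{F}_p(t)$, $\lambda=t$, and $T_\Bbbk$ dual to $1$ in the basis $1,t^{1/q},\dots$ gives $T_\Bbbk(\lambda^{1/q})=0$. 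So over an imperfect $F$-finite field the step ``nonzero coefficient, hence nonzero value'' fails as written. The fix is short: replace $h$ by $u^{q}h$ with $u\in\Bbbk^{1/q}$ chosen so that $T_\Bbbk(u\lambda^{1/q})=1$ (possible because the pairing $(u,v)\mapsto T_\Bbbk(uv)$ is nondegenerate, $T_\Bbbk$ being a generator); note $u^{q}\in\Bbbk$, so $u^{q}h\in(I^{[q]}:_S I)$ and the remaining monomials still contribute zero. Alternatively, avoid the monomial gymnastics entirely, as Glassbrenner and Fedder do: the set $J$ of values $\phi(c_0^{1/q})$, $\phi\in\operatorname{Hom}_R(R^{1/q},R)$, is an ideal of $R$, homogeneous when $c_0$ and $T$ are homogeneous; applying your key lemma to the multiples $vh$, $v\in S$, shows $J$ lies in the image of $\mathfrak{m}$ if and only if $\tilde c_0(I^{[q]}:_S I)\subseteq\mathfrak{m}^{[q]}$, and a proper homogeneous ideal must lie in the image of $\mathfrak{m}$, so non-containment forces $1\in J$, i.e., the desired splitting.
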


\begin{theorem}(Fedder's criterion, \cite[Theorem 1.12]{Fe83})
    Let $S$ be a polynomial ring over an $F$-finite field of prime characteristic $p>0$ and $\mathfrak{m}$ denote its homogeneous maximal ideal. If $I$ is a homogeneous ideal, then $S/I$ is $F$-pure if and only if $(I^{[p]}:_S I)\nsubseteq \mathfrak{m}^{[p]}$.
\end{theorem}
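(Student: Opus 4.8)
The plan is to run the classical reduction of $F$-purity to a finite-colength condition inside the polynomial ring $S$. Since $\Bbbk$ is $F$-finite, so is $R:=S/I$, hence $R^{1/p}$ is a module-finite --- in particular finitely presented --- $R$-module. A pure submodule with finitely presented quotient is a direct summand, so $R$ is $F$-pure if and only if it is $F$-\emph{split}: the inclusion $R\hookrightarrow R^{1/p}$ admits an $R$-linear retraction. In turn, $R\hookrightarrow R^{1/p}$ splits exactly when there is a \emph{surjective} $\theta\in\operatorname{Hom}_R(R^{1/p},R)$; indeed, if $\theta(u)=1$ then $x\mapsto\theta(ux)$ is a retraction, and the converse is immediate. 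Working in the $\mathbb N$-graded category (equivalently, localizing at $\mathfrak m$), surjectivity of a homomorphism with target $R$ amounts to its image not being contained in $\mathfrak m/I$. Thus the theorem reduces to (i) computing $\operatorname{Hom}_R(R^{1/p},R)$ in terms of ideals of $S$, and (ii) detecting when the associated maps are surjective.

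Everything rests on the structure of $S^{1/p}$ over the regular ring $S$. Because $S$ is a polynomial ring over the $F$-finite field $\Bbbk$, the module $S^{1/p}$ is $S$-free, with basis the products of a $\Bbbk$-basis of $\Bbbk^{1/p}$ with the monomials $x^{a/p}=x_1^{a_1/p}\cdots x_n^{a_n/p}$, $0\le a_i\le p-1$. Consequently $\operatorname{Hom}_S(S^{1/p},S)$ is free of rank one over $S^{1/p}$; fix its canonical generator $\Phi$ (projection onto the $x^{(p-1,\dots,p-1)/p}$-coordinate, together with a fixed $\Bbbk$-linear retraction $\Bbbk^{1/p}\to\Bbbk$ when $\Bbbk$ is imperfect), so that every $S$-linear map $S^{1/p}\to S$ is $\Phi\circ(\cdot\,v^{1/p})$ for a unique $v\in S$. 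The one computational lemma is: for $h\in S$ and any ideal $J\subseteq S$,
\[
\Phi\bigl(h^{1/p}S^{1/p}\bigr)\subseteq J \quad\Longleftrightarrow\quad h\in J^{[p]} .
\]
The direction $\Leftarrow$ is immediate from $S$-linearity of $\Phi$: writing $h=\sum a_jb_j^{\,p}$ with $b_j\in J$ gives $h^{1/p}=\sum a_j^{1/p}b_j$, so $\Phi(h^{1/p}S^{1/p})\subseteq\sum_j b_jS\subseteq J$. For $\Rightarrow$ it suffices to check $h\in\Phi(h^{1/p}S^{1/p})^{[p]}$, which falls out of expanding $h^{1/p}$ in the monomial basis and applying $\Phi$ (over a perfect $\Bbbk$, $\Phi(h^{1/p}S^{1/p})=(h_e)_e$ for $h^{1/p}=\sum_e h_e x^{e/p}$, and $h=\sum_e x^e h_e^{\,p}$); then $\Phi(h^{1/p}S^{1/p})\subseteq J$ forces $h\in\Phi(h^{1/p}S^{1/p})^{[p]}\subseteq J^{[p]}$.

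To finish, apply $\operatorname{Hom}_S(-,R)$ to $0\to I^{1/p}\to S^{1/p}\to R^{1/p}\to 0$, where $I^{1/p}$ is the kernel --- the $S^{1/p}$-ideal generated by the $p$-th roots of a generating set of $I$. Since $S^{1/p}$ is $S$-projective, $\operatorname{Hom}_S(S^{1/p},R)=\operatorname{Hom}_S(S^{1/p},S)\otimes_S R$, and $\operatorname{Hom}_R(R^{1/p},R)$ is its submodule of maps that kill $I^{1/p}$ in $R$. Writing such a map as $\Phi\circ(\cdot\,v^{1/p})$, $v\in S$, the lemma converts ``kills $I^{1/p}$ in $R$'' into ``$vg\in I^{[p]}$ for all $g\in I$'', i.e.\ $v\in I^{[p]}:_S I$, while the indeterminacy of the lift is precisely $v\in I^{[p]}$. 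This gives $\operatorname{Hom}_R(R^{1/p},R)\cong (I^{[p]}:_S I)/I^{[p]}$, the class of $v$ corresponding to the map $\theta_v$ with $\theta_v(1)=\overline{\Phi(v^{1/p})}$. For $v\in I^{[p]}:_S I$, the lemma with $J=\mathfrak m$ shows $\theta_v$ is surjective iff $\Phi(v^{1/p}S^{1/p})\not\subseteq\mathfrak m$ iff $v\notin\mathfrak m^{[p]}$; and since $I^{[p]}:_S I$ is stable under multiplication by monomials, after replacing $v$ by a suitable $x^a v$ one realizes any such $v$ by an honest splitting. Combining, $R$ is $F$-pure $\iff$ some $\theta_v$ is surjective $\iff$ $(I^{[p]}:_S I)\not\subseteq\mathfrak m^{[p]}$. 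The substantive points are the hard direction of the lemma and the $\operatorname{Hom}$-computation --- both powered by the freeness of $S^{1/p}$ over the regular ring $S$ --- while the homological bookkeeping and the passage to the maximal-ideal level are routine.
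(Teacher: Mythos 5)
The paper does not prove this statement at all: it is quoted verbatim as Fedder's criterion with a citation to \cite[Theorem 1.12]{Fe83}, so there is no internal proof to compare against. Your argument is, in substance, Fedder's original proof, and it is correct: the reduction of $F$-purity to $F$-splitting via finite presentation of $R^{1/p}/R$, the freeness of $S^{1/p}$ over $S$ with the mixed-monomial (and $\Bbbk^{1/p}$-basis) basis, the identification $\operatorname{Hom}_S(S^{1/p},S)\cong S^{1/p}\cdot\Phi$, the key lemma $\Phi\bigl(h^{1/p}S^{1/p}\bigr)\subseteq J \Leftrightarrow h\in J^{[p]}$, and the resulting isomorphism $\operatorname{Hom}_R(R^{1/p},R)\cong (I^{[p]}:_S I)/I^{[p]}$ are exactly the ingredients of \cite[Lemma 1.6, Theorem 1.12]{Fe83}. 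Two small points you gloss over are worth making explicit. First, in the graded (non-local) setting, ``image not contained in $\mathfrak m/I$ implies surjective'' requires the image to be a homogeneous ideal; since $I$ is homogeneous, $I^{[p]}:_S I$ and $\mathfrak m^{[p]}$ are homogeneous, so one may choose the witness $v$ homogeneous (or localize at $\mathfrak m$ and use that purity of Frobenius for an $\mathbb N$-graded ring is detected at the irrelevant maximal ideal), after which the step is legitimate. Second, in the hard direction of your lemma over an imperfect field, the clean statement is that if $h^{1/p}=\sum_i s_i b_i$ in the chosen free basis $\{b_i\}$ of $S^{1/p}$, then the ideal $\Phi(h^{1/p}S^{1/p})$ contains all coefficients $s_i$ (dual-basis maps are $S^{1/p}$-multiples of $\Phi$) and $h=\sum_i s_i^p b_i^p\in (s_i)^{[p]}$ because $b_i^p\in S$; your parenthetical only spells out the perfect case, but the general case is the same computation. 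With those routine points filled in, the proposal is a complete and correct proof of the cited criterion.
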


In the special case when $I$ is generated by a regular sequence, we have
\[ (I^{[q]}:_S I) = (\omega^{q-1}) + I^{[q]},  \]
where $\omega$ denotes the product of minimal generators of $I$. Therefore, to show that $I$ in this case defines an $F$-pure (respectively, $F$-regular ring), it is necessary and sufficient to show that $\omega^{p-1} \notin \mathfrak{m}^{[p]}$ (respectively, that for each $c\in (S/I)^\circ$, we have $c\omega^{q-1} \notin \mathfrak{m}^{[q]}$ for some $q=p^e$). We remark that all the rings that we will prove to be $F$-pure or $F$-regular in this paper are complete intersections.

It turns out that one does not need to check the definition of $F$-regularity of a ring $R$ and in turn, the condition in Glassbrenner's criterion, for every $c\in R^\circ$. It suffices to verify it for just one certain nonzero $c\in R^\circ$, called a \textit{test element}. There are many test elements. In fact, it is easy to see that they form an ideal. One way to obtain a test element is from the singular locus of the ring:

\begin{theorem}\label{thm:poweristestelement}(\cite[Theorem 3.4]{HHFrance89})
    Let $R$ be an $F$-finite reduced ring. Then every element $c\in R^\circ$ such that $R_c$ is regular has a power which is a test element.
\end{theorem}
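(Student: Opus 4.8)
The plan is to realize a power of $c$ as an element of the (big) test ideal of $R$, exhibited concretely through the Frobenius trace. Set
\[
\mathfrak{b} \coloneqq \sum_{e\geq 1}\ \sum_{\phi}\ \phi\bigl(c^{1/p^{e}}R^{1/p^{e}}\bigr)\ \subseteq\ R,
\]
the inner sum running over all $R$-linear maps $\phi\colon R^{1/p^{e}}\to R$. I would establish two things: first, that $c^{N}\in\mathfrak{b}$ for some $N$; second, that every element of $\mathfrak{b}$ multiplies $I^{*}$ into $I$ for every ideal $I$, i.e. $\mathfrak{b}$ lies inside the test ideal of $R$. Combining these gives that $c^{N}$ is a test element (and $c^{N}\in R^{\circ}$ because $c$ is, minimal primes being prime).

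For the first step I would exploit $F$-finiteness and the regularity of $R_{c}$. Since $R$ is $F$-finite, each $R^{1/q}$ (with $q=p^{e}$) is a finitely presented $R$-module, so formation of $\operatorname{Hom}_{R}(R^{1/q},R)$ commutes with localization at $c$, and hence so does the formation of $\mathfrak{b}$. Because $R_{c}$ is regular and $F$-finite, the inclusion $R_{c}\hookrightarrow(R_{c})^{1/q}$ splits for every $q$ (Kunz's flatness criterion, equivalently $F$-splitness of regular rings); fixing a splitting $\sigma$ with $\sigma(1)=1$ and using that $c^{1/q}$ is a unit of $(R_{c})^{1/q}$, one gets $\mathfrak{b}R_{c}=\sum_{\phi}\phi\bigl((R_{c})^{1/q}\bigr)\ni\sigma(1)=1$, so $\mathfrak{b}R_{c}=R_{c}$. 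Thus the cyclic module $R/\mathfrak{b}$ vanishes after inverting $c$, so a single power $c^{N}$ annihilates it, i.e. $c^{N}\in\mathfrak{b}$. Note that no Frobenius iteration or chain-stabilization is needed here, since $\mathfrak{b}$ is a single ideal summed over all $q$.

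The second step is the heart of the matter, and I expect it to be the main obstacle. Given $x\in I^{*}$ with witness $d\in R^{\circ}$, so $d\,x^{q}\in I^{[q]}$ for all large $q$, reducedness and characteristic $p$ make $q$-th roots unique and additive, so $d^{1/q}x\in I\,R^{1/q}$; applying an $R$-linear $\phi\colon R^{1/q}\to R$ and using $R$-linearity together with $\phi\bigl(I\,R^{1/q}\bigr)\subseteq I$ yields $\phi(d^{1/q})\,x\in I$. The multiplier obtained this way, however, depends on the $a$ $priori$ uncontrolled witness $d$, and the crux is to trade it for the fixed element $c$ — in other words, to show that $\mathfrak{b}$ does not depend on the choice of $c$ with $R_{c}$ regular and that it localizes correctly. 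Neither is formal, precisely because tight closure itself does not commute with localization in general; this is where the genuine content of the theory of test elements is used. I would close the gap by invoking, for $F$-finite rings, the coincidence of the weak, finitistic, and strong (big) test ideals (Lyubeznik--Smith \cite{LS99}), or the original analysis of Hochster--Huneke, to identify $\mathfrak{b}$ with the test ideal of $R$, whence $c^{N}\in\mathfrak{b}$ is a test element. Throughout, reducedness (for unique $q$-th roots, hence for the embeddings $R\hookrightarrow R^{1/q}$) and $F$-finiteness (for module-finiteness, Hom-localization, and the test-ideal comparisons) are indispensable and cannot be dropped.
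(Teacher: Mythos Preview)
The paper does not prove this statement; it is quoted verbatim from \cite[Theorem~3.4]{HHFrance89} and used as a black box, so there is no in-paper argument to compare against.

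On its own merits, your first step is correct and cleanly isolates the roles of $F$-finiteness and regularity of $R_{c}$. The second step, however, does not go through, and the gap you flag is fatal rather than merely technical. Your computation shows $\phi(d^{1/q})\,x\in I$ for a tight-closure \emph{witness} $d$ of $x\in I^{*}$; but $\mathfrak{b}$ is built from the fixed element $c$, and nothing you have written connects $\phi(c^{1/q}r^{1/q})$ to the containment $c\,r\,x^{q}\in I^{[q]}$ you would need. Your proposed remedies are circular: ``the original analysis of Hochster--Huneke'' \emph{is} the theorem under discussion, and the Lyubeznik--Smith comparison of test ideals in \cite{LS99} (which in any case is proved for graded rings) takes the existence of test elements as input. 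The assertion that $\mathfrak{b}$ coincides with the test ideal is, in effect, equivalent to the theorem, so invoking it proves nothing.

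What the Hochster--Huneke argument actually does is different. From a Frobenius splitting of the regular ring $R_{c}$ one clears denominators to produce a single $R$-linear map $\phi\colon R^{1/p}\to R$ with $\phi(1)=c^{N}$. Iterating $\phi$ along the tower $R^{1/p^{e}}\to R^{1/p^{e-1}}\to\cdots\to R$ and applying the composite to the relation $d\,x^{p^{e}}\in I^{[p^{e}]}$ lets one \emph{trade} the uncontrolled witness $d$ for a fixed power of $c$ after finitely many steps; one then checks directly that this fixed power multiplies every $I^{*}$ into $I$. The essential content is this exchange mechanism, not an abstract identification of test ideals.
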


\begin{theorem}\label{thm:radicaltestideal}
    (\cite[Proposition 2.5]{FW89}) Let $R$ be an $F$-finite and $F$-pure ring. Then the ideal generated by all test elements is radical.
\end{theorem}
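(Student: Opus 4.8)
We sketch the argument. Write $\tau=\tau(R)$ for the ideal generated by all test elements; since $R$ is $F$-pure it is reduced, and since it is moreover $F$-finite, test elements exist (e.g.\ by Theorem~\ref{thm:poweristestelement} applied to an element $c\in R^{\circ}$ with $R_c$ regular), so $\tau$ is a nonzero ideal. The plan is to prove $\sqrt{\tau}\subseteq\tau$, and two facts drive it. First, the \emph{$F$-stability} of the test ideal: for every $e\ge 1$ and every $R$-linear map $\phi\colon R^{1/p^e}\to R$ one has $\phi(\tau^{1/p^e})\subseteq\tau$, where $\tau^{1/p^e}:=\{x^{1/p^e}:x\in\tau\}\subseteq R^{1/p^e}$. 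Second, since $R$ is $F$-finite and $F$-pure, the inclusion $R\hookrightarrow R^{1/p^e}$ splits for every $e$; fix an $R$-linear retraction $\phi_e\colon R^{1/p^e}\to R$, so that $\phi_e|_R=\operatorname{id}_R$.

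Granting these two facts, the proof is short. Let $c\in\sqrt{\tau}$, say $c^m\in\tau$, and choose $e$ with $q:=p^e\ge m$; then $c^q=c^{\,q-m}c^m\in\tau$. Since $R$ is reduced, so is $R^{1/q}$, hence $c^q$ has a unique $q$-th root in $R^{1/q}$, namely the image of $c$ under $R\hookrightarrow R^{1/q}$, which therefore lies in $\tau^{1/q}$. Applying $\phi_e$ and invoking $F$-stability, $c=\phi_e(c)\in\phi_e(\tau^{1/q})\subseteq\tau$. Thus $\sqrt{\tau}=\tau$, i.e.\ $\tau$ is radical.

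The splitting fact is standard (a pure inclusion with finitely presented cokernel splits, and $R^{1/p^e}/R$ is finitely presented by $F$-finiteness), so the heart of the matter --- and the step I expect to be the main obstacle --- is $F$-stability. This is the familiar uniform $F$-compatibility of the test ideal, which I would either quote from the Hochster--Huneke theory of test elements or establish as follows. Since $R$ is $F$-finite, $R^{1/p^e}$ is a finite $R$-module, so --- using that the $p^e$-th power map is additive and $R^{1/p^e}$ is reduced --- $\tau^{1/p^e}$ is generated as an $R$-module by finitely many elements $(\gamma c)^{1/p^e}$ with $\gamma\in R$ and $c$ a test element. For such an element, the defining property of $c$ gives $\gamma c\,z^{q'}\in I^{[q']}$ for every ideal $I$, every $z$ in the tight closure $I^{*}$, and every power $q'$ of $p$; taking $q'=qp^e$ yields $\gamma c\,z^{qp^e}\in I^{[qp^e]}=(I^{[q]})^{[p^e]}$, extracting $p^e$-th roots yields $(\gamma c)^{1/p^e}z^{q}\in I^{[q]}R^{1/p^e}$, and applying the $R$-linear map $\phi$ --- using that $z^q$ and the generators of $I^{[q]}$ lie in $R$ --- yields $z^{q}\phi\big((\gamma c)^{1/p^e}\big)\in I^{[q]}$. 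Hence $\phi\big((\gamma c)^{1/p^e}\big)$ satisfies the test condition, so it lies in $\tau$, and therefore $\phi(\tau^{1/p^e})\subseteq\tau$. The one delicate point is the bookkeeping with the minimal primes of $R$ --- namely that the ideal of all elements satisfying the test condition coincides with the ideal generated by those test elements lying in $R^{\circ}$ --- which is part of the Hochster--Huneke/Lyubeznik--Smith test-element theory in the $F$-finite case and is immediate when $R$ is a domain.
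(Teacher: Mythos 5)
The paper offers no proof of this statement---it is quoted from \cite{FW89}---so the only comparison available is with the classical Fedder--Watanabe argument, and your route is genuinely different from it. Your sketch is correct: the reduction of radicality to (i) a splitting of $R\to R^{1/q}$ (purity plus a finitely presented cokernel splits, and $R^{1/q}$ is module-finite over the Noetherian ring $R$) and (ii) the compatibility $\varphi(\tau^{1/q})\subseteq\tau$ for all $\varphi\in\operatorname{Hom}_R(R^{1/q},R)$ is sound, and your verification of (ii) via the test condition is the standard one. The genuinely delicate point is exactly the one you flag: your computation shows $\varphi\big((\gamma c)^{1/q}\big)$ lies in $\tilde\tau:=\bigcap_I (I:_R I^*)$, and you need $\tilde\tau$ to equal the ideal generated by test elements; this does hold here because test elements exist (Theorem \ref{thm:poweristestelement} plus the openness of the regular locus for the excellent ring $R$), via a prime-avoidance argument: given $d\in\tilde\tau$ and a test element $c$, choose $s$ lying in every minimal prime not containing $d$ and in no minimal prime containing $d$, so that $d+sc\in R^{\circ}$ is a test element and $d=(d+sc)-sc$. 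For comparison, the cited proof of \cite{FW89} is more elementary: if $z^q\in\tau$, then for any ideal $I$ and $u\in I^*$ one has $u^q\in (I^{[q]})^*$, hence $(zu)^q=z^qu^q\in I^{[q]}$, and $F$-purity forces $zu\in I$ because every ideal of an $F$-pure ring is Frobenius closed; thus $z\in\tilde\tau=\tau$. That route uses only purity---no $F$-finiteness, no splitting, no compatibility lemma---whereas your approach proves along the way the uniform $F$-compatibility of the test ideal, a stronger and independently useful fact, at the cost of more machinery than the radicality statement requires.
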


Combining these, one obtains:

\begin{corollary}\label{cor:testelement}
    Let $R$ be an $F$-finite and $F$-pure ring. Then every element $c\in R^\circ$ such that $R_c$ is regular is a test element.
\end{corollary}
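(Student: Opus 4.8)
The plan is to derive the statement immediately by combining the two preceding theorems. Since an $F$-pure ring is reduced, Theorem~\ref{thm:poweristestelement} applies to $R$: given $c\in R^\circ$ with $R_c$ regular, some power $c^N$ is a test element. Let $\tau\subseteq R$ be the ideal generated by all test elements. Then $c^N\in\tau$. Because $R$ is $F$-finite and $F$-pure, Theorem~\ref{thm:radicaltestideal} says $\tau$ is radical, so from $c^N\in\tau$ we conclude $c\in\tau$.

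The only remaining point is to upgrade ``$c\in\tau$'' to ``$c$ is a test element''. Write $c=\sum_i r_i c_i$ with $r_i\in R$ and each $c_i$ a test element. For any ideal $I\subseteq R$ and any element $z$ in the tight closure of $I$ we have $c_i z\in I$ for every $i$, hence $cz=\sum_i r_i c_i z\in I$. Since moreover $c\in R^\circ$ by hypothesis, $c$ satisfies the definition of a test element. (Equivalently, $\tau$ is precisely the set of test elements together with $0$ — the observation alluded to above when noting that test elements form an ideal.)

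There is no genuine obstacle here: the corollary is a formal consequence of Theorems~\ref{thm:poweristestelement} and~\ref{thm:radicaltestideal}. The only thing to keep straight is the distinction between an element of the test ideal and an actual test element, which causes no trouble precisely because $c$ is assumed to lie in $R^\circ$.
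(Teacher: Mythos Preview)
Your argument is correct and is exactly the intended one: the paper does not spell out a proof, but simply introduces the corollary with ``Combining these, one obtains,'' meaning precisely the combination of Theorems~\ref{thm:poweristestelement} and~\ref{thm:radicaltestideal} you carried out. Your added care in distinguishing membership in the test ideal from being a test element (resolved by the hypothesis $c\in R^\circ$) and in noting that $F$-purity gives reducedness is appropriate.
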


\section{Proof of Theorem \ref{thm:main}}

Let $\Bbbk$ be a field of prime characteristic $p>0$. To ease the notations, set $A_n\coloneqq \Bbbk[X,Y]/\mathfrak{c}$ and $B_n\coloneqq \Bbbk[X,Y]/\mathfrak{a}$ where $X$ and $Y$ are $n\times n$ matrices of indeterminates, and their commutator matrix  $C$, and $\mathfrak{a}$ and $\mathfrak{c}$ are as defined earlier. To show the $F$-regularity of a ring, it suffices to do so for a faithfully flat extension \cite[Proposition 4.12]{HH90}. Therefore, for the rest of the paper, we will assume $\Bbbk$ is algebraically closed. 

Our strategy is to use deformation techniques and simplify the task at hand by decomposing $\Bbbk[X,Y]/\mathfrak{c}$ into a tensor product of rings of smaller dimensions. Towards this, we will start by proving that some rings of small dimensions are $F$-regular.

\begin{proposition}\label{thm:T}
    Let $W=(w_{ij})$ and $Z=(z_{ij})$ be $3\times 3$ matrices of indeterminates. Then the ring
    \[ 
    T=\Bbbk[W,Z]/(f_1,f_2,f_3,f_4)
    \]
    is $F$-regular, where
    \begin{align*}
        f_1&= w_{21}z_{12} - w_{12}z_{21}+w_{23}z_{32} - w_{32}z_{23} ,\\
        f_2&=w_{22}z_{21} - w_{21}z_{22} +w_{23}z_{31} - w_{31}z_{23} +  w_{21}z_{11} - w_{11}z_{21},\\
        f_3&=w_{13}z_{32} - w_{32}z_{13} - w_{22}z_{12} + w_{12}z_{22} - w_{12}z_{11} + w_{11}z_{12},\\
        f_4&= w_{12}z_{21} - w_{21}z_{12}+ w_{13}z_{31} - w_{31}z_{13} .        
    \end{align*}
\end{proposition}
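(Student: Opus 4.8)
The four generators are, up to sign and a relabeling of indices, the entries $c_{22},c_{21},c_{12},c_{11}$ of the commutator $WZ-ZW$ of the generic matrices $W,Z$; equivalently, $I\coloneqq(f_1,f_2,f_3,f_4)$ is generated by the entries of the upper-left $2\times2$ block of $WZ-ZW$, and the variables $w_{33},z_{33}$ occur in none of the $f_i$. The plan is to verify $F$-regularity of $T$ by Glassbrenner's criterion, working throughout with initial monomials for a suitable weight $w$ on the $18$ variables. First I would pick $w$ so that each $\mathrm{in}_w(f_i)$ is a single quadratic monomial and the four monomials $\mathrm{in}_w(f_1),\dots,\mathrm{in}_w(f_4)$ are pairwise coprime; one can arrange, for instance, $\mathrm{in}_w(f_1)=w_{32}z_{23}$, $\mathrm{in}_w(f_2)=w_{22}z_{21}$, $\mathrm{in}_w(f_3)=w_{12}z_{11}$, $\mathrm{in}_w(f_4)=w_{13}z_{31}$, and a short inspection shows the strict inequalities on $w$ that this imposes are consistent. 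Since four pairwise coprime quadratic monomials form a regular sequence, $\height I=4$, so $I$ is a complete intersection and $T$ is a $14$-dimensional Cohen--Macaulay ring (in fact a domain) with $(I^{[q]}:_S I)=(\omega^{q-1})+I^{[q]}$ for all $q=p^e$, where $\omega\coloneqq f_1f_2f_3f_4$. The same computation already gives $F$-purity: $\prod_i\mathrm{in}_w(f_i)^{p-1}$ is the unique lowest-weight monomial of $\omega^{p-1}$, hence survives with unit coefficient, and it has every exponent equal to $p-1<p$, so $\omega^{p-1}\notin\mathfrak m^{[p]}$.

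Next I would produce a convenient test element. In the block form, writing $W_1,Z_1$ for the upper-left $2\times2$ blocks of $W,Z$, the ideal $I$ is that of the matrix equation $W_1Z_1-Z_1W_1=P'Q'$, where $P'$ is assembled from the third-column entries and $Q'$ from the third-row entries of $W$ and $Z$; the Jacobian of the four relations with respect to the four entries of $P'$ equals, up to a unit, $(\det Q')^2$, and $\det Q'=\pm(w_{31}z_{32}-w_{32}z_{31})$. Hence for $c\coloneqq w_{31}z_{32}-w_{32}z_{31}$ the localization $T_c$ is regular by the Jacobian criterion (legitimate since $I$ is a complete intersection of the expected codimension), and $c$ is a nonzerodivisor on $T$ because $c\notin I$ for degree reasons. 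As $T$ is $F$-finite ($\Bbbk$ being algebraically closed, hence perfect) and $F$-pure, Corollary~\ref{cor:testelement} shows that $c$ is a test element.

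Finally I would arrange the weight $w$ so that, in addition, $\mathrm{in}_w(c)$ --- one of $w_{31}z_{32}$ or $w_{32}z_{31}$ --- involves none of the eight variables occurring in $\mathrm{in}_w(f_1),\dots,\mathrm{in}_w(f_4)$; this is again a consistent finite system of strict linear inequalities, compatible with the coprimeness conditions (realizable, for example, with $\mathrm{in}_w(c)=w_{31}z_{32}$ and the four initial monomials above, so that ten distinct variables are involved in all). Then $\mathrm{in}_w(c\,\omega^{q-1})=\mathrm{in}_w(c)\cdot\prod_i\mathrm{in}_w(f_i)^{q-1}$ is the unique lowest-weight monomial of $c\,\omega^{q-1}$, so it occurs with unit coefficient, and since the ten variables involved are pairwise distinct its exponents are $1$ (for the two dividing $\mathrm{in}_w(c)$) and $q-1$ (for the eight dividing $\prod_i\mathrm{in}_w(f_i)$), all strictly less than $q$. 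Thus $c\,\omega^{q-1}\notin\mathfrak m^{[q]}$ for every $q=p^e$, and Glassbrenner's criterion gives that $T$ is $F$-regular. The step I expect to be the main obstacle is this interlocking choice of $w$: one has to diagonalize all four $f_i$ into pairwise coprime monomials while keeping $\mathrm{in}_w(c)$ off their support, and since $f_1,\dots,f_4$ overlap heavily --- $f_1$ and $f_4$, for instance, agree up to sign in the terms $w_{21}z_{12}-w_{12}z_{21}$ --- the cone of admissible weights is narrow; once a workable $w$ is fixed, every coefficient computation is immediate, as passing to the lowest-weight monomial removes all cancellation.
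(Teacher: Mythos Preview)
Your approach is correct and genuinely different from the paper's. The paper proceeds by hand: it exhibits an explicit homogeneous system of parameters containing $f_1,\dots,f_4$ to prove the complete intersection property, passes to a carefully chosen quotient to get $F$-purity via Fedder, produces the test element $w_{23}z_{13}-w_{13}z_{23}$ and verifies it is $T$-regular by building yet another explicit system of parameters, and finally checks Glassbrenner by working modulo a list of variables and tracking which monomials survive. Your weight-order argument packages all of this into one mechanism: once the five initial monomials $\mathrm{in}_w(f_1),\dots,\mathrm{in}_w(f_4),\mathrm{in}_w(c)$ are pairwise coprime, the complete intersection property, $F$-purity, and the Glassbrenner condition all follow from the single observation that the lowest-weight monomial of $c\,\omega^{q-1}$ appears with a unit coefficient and has all exponents below $q$. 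This is more economical and more conceptual than the paper's explicit bookkeeping; the trade-off is that the paper never has to argue that a cone of weight inequalities is nonempty.

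There is one genuine gap. You write that ``$c$ is a nonzerodivisor on $T$ because $c\notin I$ for degree reasons,'' but $c\notin I$ only gives $c\neq 0$ in $T$; it does not by itself place $c$ in $T^\circ$, and your parenthetical ``(in fact a domain)'' is not justified at that point. The fix is already implicit in your method: with your weight $w$, the five monomials $\mathrm{in}_w(c),\mathrm{in}_w(f_1),\dots,\mathrm{in}_w(f_4)$ are pairwise coprime, so $\height(c,f_1,\dots,f_4)\geq 5$, hence $c,f_1,\dots,f_4$ is a regular sequence in $\Bbbk[W,Z]$ and $c$ is $T$-regular. You should also record an explicit weight (or at least check one) rather than assert that the inequalities are ``consistent'': with the choices $\mathrm{in}_w(f_1)=w_{32}z_{23}$, $\mathrm{in}_w(f_2)=w_{22}z_{21}$, $\mathrm{in}_w(f_3)=w_{12}z_{11}$, $\mathrm{in}_w(f_4)=w_{13}z_{31}$, $\mathrm{in}_w(c)=w_{31}z_{32}$, there are nontrivial constraints purely among these ten ``small'' variables (for instance $f_1$ and $f_4$ both contain $w_{12}z_{21}$, and $f_2$ contains $w_{31}z_{23}$), so one cannot simply set all ten to weight zero. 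A concrete assignment such as $w(w_{12})=w(w_{22})=w(z_{11})=0$, $w(z_{32})=2$, $w(w_{13})=w(w_{32})=4$, $w(z_{23})=w(z_{31})=5$, $w(w_{31})=6$, $w(z_{21})=10$, and all remaining variables of large weight, does satisfy every required inequality.
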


\begin{proof}
    We first claim that the set of elements
    \begin{equation}\label{equ1}
        \begin{gathered}
            w_{11},\  w_{21},\ w_{23},\ w_{32},\ w_{33}, \ z_{11},\ z_{13},\ z_{22},\ z_{32},\ z_{33},\\
       w_{12}-z_{21},\ w_{13}-z_{31},\ w_{22}-z_{12}, \  w_{31}-z_{23},\\
        f_1,\ f_2,\ f_3,\ f_4
        \end{gathered}
    \end{equation}
    is a homogeneous system of parameters for $\Bbbk[W,Z]$, and thus a regular sequence on $\Bbbk[W,Z]$. Indeed, this set has $18$ elements, and $\dim \Bbbk[W,Z]=18$, hence it suffices to show that the ring $\Bbbk[W,Z]$ modulo the ideal they generate has Krull dimension $0$. This ring is isomorphic to
    \[\frac{\Bbbk[w_{12},\ w_{13},\ w_{22},\ w_{31}]}{(w_{12}^2,\ \ w_{22}w_{12}-w_{31}^2, \ \ w_{22}^2, \ \ w_{13}^2)}, \]
    which is evidently of dimension $0$. In particular, the ring $T$ is a complete intersection.

    Next we show that $T$ is $F$-pure. Since $F$-purity deforms for complete intersections by Theorem \ref{thm:fregdeform}, it suffices to show that the homomorphic image of $T$ modulo the ideal generated by $$w_{11},\ w_{21},\ w_{23},\ w_{32},\ w_{33},\ z_{11},\ z_{13},\ z_{22},\ z_{32},\ z_{33},$$
    a subset of (\ref{equ1}), is $F$-pure. Indeed, this homomorphic image is the complete intersection:
    \[ \frac{\Bbbk[w_{12},\ w_{13},\ w_{22},\ w_{31},\ z_{12},\ z_{21},\ z_{23},\ z_{31}]}{(w_{12}z_{21}, \ \  -w_{22}z_{21}+w_{31}z_{23},\  \ w_{22}z_{12},\ \ w_{13}z_{31})}, \]
    which is evidently $F$-pure by Fedder's criterion since
    \[\big((w_{12}z_{21})(-w_{22}z_{21}+w_{31}z_{23})( w_{22}z_{12})(w_{13}z_{31})\big)^{p-1}  \]
    equals $(w_{12}w_{13}w_{22}w_{31}z_{12}z_{21}z_{23}z_{31})^{p-1}$ modulo $\mathfrak{m}^{[p]}$.

    We turn to $F$-regularity and first need a test element. We compute a maximal minor of the Jacobian matrix of the ideal defining $T$:
    \[
    \det \frac{\partial(f_1,f_2,f_3,f_4)}{\partial(w_{31},w_{32},z_{31},z_{32})}=\det \begin{pmatrix}
        0 & -z_{23} & 0 & -z_{13}\\
        -z_{23} & 0 & -z_{13} & 0\\
        0 & w_{23} & 0 & w_{13}\\
        w_{23} & 0 & w_{13} & 0\\
    \end{pmatrix}=(w_{23}z_{13}-w_{13}z_{23})^2.
    \]
    Hence, by Corollary \ref{cor:testelement}, the polynomial $f\coloneqq w_{23}z_{13}-w_{13}z_{23}$ is a test element once we show that it is in $T^\circ$. Observe that the homomorphic image of $K[W,Z]$ modulo the ideal generated by the $18=\dim K[W,Z]$ elements
		\begin{equation}
		    \begin{gathered}\label{set1}
			w_{11},\  w_{12},\  w_{31},\  w_{33},\  z_{11}, \ z_{21},\  z_{32}, \ z_{33},\\
			w_{13}-z_{31}, \ w_{21}-z_{22},\  w_{23}-z_{13},\  w_{32}- z_{23}, \ w_{13}+ w_{22} + z_{12},\\
			f,\ f_1,\ f_2,\ f_3,\ f_4
		\end{gathered}
		\end{equation}
		is isomorphic to    
		\begin{equation}\label{quotientringforregseq}
		    K[w_{13},\ w_{21}, \ w_{22},\  w_{23},\  w_{32}]/(g,\ g_1, \ g_2,\ g_3,\ g_4)
		\end{equation}
        where 
        \begin{align*}
            g&=w_{23}^2-w_{13}w_{32},\\
            g_1&=w_{32}^2+w_{13}w_{21}+w_{21}w_{22},\\ 
            g_2&=w_{21}^2-w_{13}w_{23},\\ 
            g_3&=w_{22}^2+w_{13}w_{22}-w_{23}w_{32},\\ 
            g_4&=w_{13}^2+w_{13}w_{21}+w_{21}w_{22}.
        \end{align*}
		The radical of the above defining ideal contains $w_{21}$ since
        \begin{multline*}
            w_{21}^4=(
            w_{21}w_{23}+w_{23}^2-w_{13}w_{32}) g_1 + (
            w_{21}^2-w_{22}w_{23}) g_2+ (          
            -w_{23}^2+w_{13}w_{32}+w_{21}w_{32})g_3\\
            + (
            -w_{23}^2+w_{13}w_{32}-w_{22}w_{32} ) g_4  +( 
            w_{13}^2+w_{22}^2-w_{23}w_{32}-w_{32}^2 )g,
        \end{multline*}
        and by examining the defining polynomials, one can show that it also contains $w_{13}, w_{22},  w_{23}$ and $  w_{32}$. The ring (\ref{quotientringforregseq}) is thus of dimension $0$, i.e., (\ref{set1}) forms a homogeneous system of parameters for $\Bbbk[W,Z]$, hence a regular sequence. In particular, $f$ is $T$-regular.
  
    Now we show that $T$ is $F$-regular. By Glassbrenner's criterion, it suffices to show that
    \[ (f)(f_1f_2f_3f_4)^{p-1} \notin \mathfrak{m}^{[p]} \]
    where $\mathfrak{m}$ denotes the homogeneous maximal ideal of $\Bbbk[W,Z]$. This is equivalent to verifying that the image of $(f)(f_1f_2f_3f_4)^{p-1}$ is nonzero in $\Bbbk[W,Z]/\mathfrak{m}^{[p]}$. 
    We will in fact, prove something stronger. For the remainder of the proof, all elements denote their homomorphic images in 
    \begin{equation}\label{homomorphicimageT}
        \Bbbk[W,Z]/\big((w_{11},w_{21},w_{31},w_{33},z_{11},z_{12},z_{32},z_{33})+\mathfrak{m}^{[p]}\big).
    \end{equation}
    We will show that
    $$(w_{23}z_{13})^{p-2} (f)(f_1f_2f_3f_4)^{p-1}  \neq 0.$$
    For degree reasons, this element is a scalar multiple of $(w_{12}w_{13}w_{22}w_{23}w_{32}z_{13}z_{21}z_{22}z_{23}z_{31})^{p-1}$, hence it suffices to find the corresponding coefficient. Note that in the ring (\ref{homomorphicimageT}) one has
    \begin{align*}
        f&=w_{23}z_{13}-w_{13}z_{23},\\
        f_1&=-w_{12}z_{21}-w_{32}z_{23},\\
        f_2&=w_{22}z_{21}+w_{23}z_{31},\\
        f_3&=-w_{32}z_{13}+w_{12}z_{22},\\
        f_4&=w_{12}z_{21}+w_{13}z_{31}.
    \end{align*}
    Since $w_{22}$ and $z_{22}$ only appear in $f_2$ and $f_3$, respectively, to obtain $w_{22}^{p-1}z_{22}^{p-1}$, the only monomial terms that matter in $f_2^{p-1}$ and $f_3^{p-1}$ are ones divisible by $w_{22}^{p-1}$ and $z_{22}^{p-1}$, respectively. Thus 
    \begin{align*}
		& \ \ \ \ \ (w_{23}z_{13})^{p-2} (f)(f_1f_2f_3f_4)^{p-1} \\
        &=(w_{23}z_{13})^{p-2} (f) (f_1^{p-1}) \big(w_{22}z_{21}\big)^{p-1}\big(w_{12}z_{22}\big)^{p-1} (f_4^{p-1})\\
        &=(w_{12}w_{22}z_{21}z_{22})^{p-1}(w_{23}z_{13})^{p-2} (f) (f_1^{p-1}) (f_4^{p-1}).
	\end{align*}
     Using similar arguments for $f_1^{p-1}$ and $f_4^{p-1}$, the above element is simplified to
    \begin{align*}
        &\ \ \ \ \ (w_{12}w_{22}z_{21}z_{22})^{p-1}(w_{23}z_{13})^{p-2} (f) (-w_{12}z_{21}-w_{32}z_{23})^{p-1} (w_{12}z_{21}+w_{13}z_{31})^{p-1}\\
        &=(w_{12}w_{22}z_{21}z_{22})^{p-1}(w_{23}z_{13})^{p-2} (f) \big( w_{32}z_{23}\big)^{p-1} \big(w_{13}z_{31}\big)^{p-1}\\
        &=(w_{12}w_{13}w_{22} w_{32}z_{21}z_{22}z_{23}z_{31})^{p-1}(w_{23}z_{13})^{p-2} (f). 
    \end{align*}
    Substituting $f$, the above equals
    \begin{align*}
        &\ \ \ \ \ (w_{12}w_{13}w_{22} w_{32}z_{21}z_{22}z_{23}z_{31})^{p-1}(w_{23}z_{13})^{p-2} (w_{23}z_{13}-w_{13}z_{23}) \\
        &=(w_{12}w_{13}w_{22} w_{23}w_{32}z_{13}z_{21}z_{22}z_{23}z_{31})^{p-1},
    \end{align*}
    which is nonzero in the ring (\ref{homomorphicimageT}), as claimed.
\end{proof}

Next we will prove the $F$-regularity of $A_n$ for some small values of $n$. Since the trace of the commutator matrix $C=XY-YX$ is $0$, we can choose the generators of $\mathfrak{c}$ to be the first $n-1$ entries of the diagonal counting from the upper-left corner, together with the anti-diagonal entries of $C$.

\begin{proposition}\label{thm:A3}
    The ring $A_3=\Bbbk[X,Y]/\mathfrak{c}$, where $X$ and $Y$ are $3\times 3$ matrices of indeterminates and $\mathfrak{c}$ denotes the ideal generated by the cross-diagonal of their commutator matrix, is $F$-regular.
\end{proposition}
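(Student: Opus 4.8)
The plan is to mimic the strategy already used for the ring $T$ in Proposition~\ref{thm:T}: exhibit a regular sequence that cuts $A_3$ down, use deformation (Theorem~\ref{thm:fregdeform}) to reduce $F$-regularity of $A_3$ to $F$-regularity of a much smaller complete intersection, and then verify the Glassbrenner condition by an explicit computation modulo a power of the maximal ideal. First I would write out the five generators of $\mathfrak{c}$ explicitly for $n=3$: the first two diagonal entries $c_{11}=\sum_k(x_{1k}y_{k1}-y_{1k}x_{k1})$ and $c_{22}=\sum_k(x_{2k}y_{k2}-y_{2k}x_{k2})$ of $C$, together with the three anti-diagonal entries $c_{13}, c_{22}, c_{31}$ — wait, more carefully, the anti-diagonal entries are $c_{13}, c_{22}, c_{31}$, but $c_{22}$ is on the main diagonal too, so the cross-diagonal generators are $c_{11}, c_{22}, c_{13}, c_{31}$ together with one more main-diagonal entry; in any case $\mathfrak{c}$ is generated by $c_{11},c_{13},c_{22},c_{31}$ and I should double-check that these four (using $c_{11}+c_{22}+c_{33}=0$) generate the cross-diagonal, giving a $4$-generated ideal in the $18$-variable polynomial ring $\Bbbk[X,Y]$.

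The first key step is to show $\mathfrak{c}$ is a complete intersection of height $4$ in $A_3$'s ambient ring, which I would do by producing an explicit homogeneous system of parameters of $\Bbbk[X,Y]$ consisting of $14$ linear/variable substitutions together with the four generators of $\mathfrak{c}$ (exactly as in the proof of Proposition~\ref{thm:T}, one checks the residue ring has Krull dimension $0$ by seeing it is a finite-dimensional algebra). Then I invoke Theorem~\ref{thm:fregdeform}: since $A_3$ is a complete intersection, hence Gorenstein, it suffices to prove $F$-regularity of the image of $A_3$ modulo a suitable subset of that system of parameters — ideally setting about ten of the variables $x_{ij},y_{ij}$ to zero so that the surviving generators $f_1,f_2,f_3,f_4$ become (after renaming the remaining eight variables) precisely the four quadrics $f_1,\dots,f_4$ of Proposition~\ref{thm:T}, or a ring isomorphic to $T$. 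If that identification works, $F$-regularity of $A_3$ follows immediately from Proposition~\ref{thm:T} without any further computation.

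The main obstacle is the bookkeeping needed to see that the specialization of $\mathfrak{c}$ really does reproduce the ring $T$: one has to choose the right ten variables to kill so that the cross products appearing in $C=XY-YX$ collapse to exactly the eight-variable quadrics $f_1,f_2,f_3,f_4$ of Proposition~\ref{thm:T}, and the matching of indices between $(X,Y)$ and $(W,Z)$ is the delicate part. Should this direct identification fail to land exactly on $T$, the fallback is to run the full Glassbrenner argument directly on $A_3$: compute a maximal minor of the Jacobian of $(f_1,\dots,f_4)$ to extract a candidate test element $f$, verify $f\in A_3^\circ$ by exhibiting yet another homogeneous system of parameters containing $f$, and then show $f\cdot(f_1f_2f_3f_4)^{p-1}\notin\mathfrak m^{[p]}$ by reducing modulo a carefully chosen monomial ideal plus $\mathfrak m^{[p]}$ and tracking the coefficient of the top monomial — exactly the endgame of the proof of Proposition~\ref{thm:T}. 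I expect the paper takes the first, cleaner route, reducing $A_3$ to $T$ by a linear change of coordinates and a specialization, so the real content is in the (routine but fiddly) verification that the specialized generators coincide with $f_1,\dots,f_4$.
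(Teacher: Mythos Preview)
Your primary route---specializing $A_3$ down to the ring $T$ of Proposition~\ref{thm:T}---does not work, and the paper does not attempt it. The ring $T$ is itself an $18$-variable complete intersection of dimension $14$, exactly the same size as $A_3$; it is not an $8$-variable ring you could reach by killing ten of the $x_{ij},y_{ij}$. Nor are $A_3$ and $T$ isomorphic: the relations $f_1,\dots,f_4$ defining $T$ do not involve $w_{33},z_{33}$ at all (indeed $T=T'[w_{33},z_{33}]$ for the $16$-variable ring $T'$ used later in the proof of Theorem~\ref{thm:main}), whereas $c_{31}$ and $c_{13}$ genuinely involve $x_{33},y_{33}$. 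In the paper, $T$ and $A_3$ (and $A_4$) are \emph{independent} base cases for the induction, not one a specialization of the other.

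What the paper actually does is precisely your fallback: a direct Glassbrenner computation on $A_3$. It takes the Jacobian minor
\[
f=-\det\frac{\partial(c_{11},c_{22},c_{31},c_{13})}{\partial(x_{11},x_{32},y_{11},y_{21})}=x_{13}y_{23}(x_{13}y_{31}-x_{31}y_{13}),
\]
checks $f\in A_3^\circ$ via an explicit homogeneous system of parameters containing $f$, and then verifies the Glassbrenner condition by tracking a single monomial coefficient after killing seven variables. Two points your fallback sketch misses: first, $F$-purity and the complete intersection property of $A_3$ are quoted from \cite{KY22} rather than reproved; second, because $\deg f=4$ while the $c_{ij}$ have degree $2$, the exponent $q=p$ does not balance---the paper works with $q=p^2$ and shows the stronger statement that $(x_{13}^{p^2-3})(y_{23}y_{31})^{p^2-2}(f)(c_{11}c_{22}c_{31}c_{13})^{p^2-1}$ has nonzero image in $\Bbbk[X,Y]/\big((x_{12},x_{22},x_{31},x_{33},y_{11},y_{22},y_{33})+\mathfrak m^{[p^2]}\big)$.
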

\begin{proof}
    The ideal $\mathfrak{c}$ is generated by the elements $c_{11},c_{22},c_{31},c_{13},$ where
    \begin{align*}
        c_{11}&=x_{12}y_{21} - x_{21}y_{12} + x_{13}y_{31} - x_{31}y_{13},\\
        c_{22}&=x_{21}y_{12} - x_{12}y_{21} + x_{23}y_{32} - x_{32}y_{23},\\
        c_{31}&=  x_{32}y_{21} -x_{21}y_{32} + x_{31}y_{11} - x_{11}y_{31}  - x_{31}y_{33} + x_{33}y_{31},\\
        c_{13}&=x_{12}y_{23} - x_{23}y_{12} + x_{11}y_{13} - x_{13}y_{11} + x_{13}y_{33} - x_{33}y_{13}.
    \end{align*}
    We observe that \[
    f\coloneqq x_{13}y_{23}(x_{13}y_{31}-x_{31}y_{13}) =- \det \frac{\partial(c_{11},c_{22},c_{31},c_{13})}{\partial(x_{11},x_{32},y_{11},y_{21})}\] is a maximal minor of the Jacobian matrix of the ideal defining $A_3$, and hence by Corollary ~\ref{cor:testelement} and the fact that $A_3$ is $F$-pure (Theorem \ref{Fsingularities}. (4)), $f$ is a test element once we show that it is in $A_3^\circ$. Indeed, the homomorphic image of $\Bbbk[X,Y]$ modulo the ideal generated by the $18=\dim \Bbbk[X,Y]$ elements
    \begin{equation}\label{set2}
        \begin{gathered}
    	x_{12}, \ x_{22}, \ x_{31},\  x_{33},\  y_{11}, \ y_{22}, \ y_{33},\\
    	x_{11}-y_{13},\  x_{13}-y_{23},\  x_{13}-y_{31}, \ x_{21}- y_{12}, \ x_{23}-y_{32},\  x_{32}-y_{21},\\
    	c_{11}, \ c_{22}, \ c_{31},\  c_{13},\  f
    \end{gathered}
    \end{equation}
    is isomorphic to
    \[ 
    \frac{\Bbbk[x_{11},\ x_{13},\ x_{21},\ x_{23},\ x_{32}]}{(x_{13}^2-x_{21}^2, \ \ x_{21}^2+ x_{23}^2-x_{13}x_{32},\ \  -x_{11}x_{13}- x_{21}x_{23}+ x_{32}^2,\ \  x_{11}^2- x_{21}x_{23},\ \   x_{13}^4
    	)}
    \]
    which is evidently of dimension $0$, i.e., (\ref{set2}) forms a homogeneous system of parameters for $\Bbbk[X,Y]$, hence a regular sequence. In particular, $f$ is $A_3$-regular. 
    
    By Glassbrenner's criterion, to prove the $F$-regularity of $A_3$, it suffices to show that
    \begin{equation}\label{glassbrennerA3}
        (f)(c_{11}c_{22}c_{31}c_{13})^{p^2-1} \notin \mathfrak{m}^{[p^2]}
    \end{equation}
    where $\mathfrak{m}$ denotes the homongeneous maximal ideal of $\Bbbk[X,Y]$.
    We will prove something stronger. For the remainder of the proof, all elements denote their homomorphic images in \begin{equation}\label{homomorphicimageA3}
        \Bbbk[X,Y]/\big( (x_{12},x_{22},x_{31},x_{33}, y_{11},y_{22},y_{33})+\mathfrak{m}^{[p^2]}\big).
    \end{equation}
    We will show that the element
    $$(x_{13}^{p^2-3})(y_{23}y_{31})^{p^2-2}(f)(c_{11}c_{22}c_{31}c_{13})^{p^2-1}$$
    is nonzero in the ring (\ref{homomorphicimageA3}).
    For degree reasons, this element equals a scalar multiple of $(x_{11}x_{13}x_{21}x_{23}x_{32}y_{12}y_{13}y_{21}y_{23}y_{31}y_{32})^{p^2-1}$. In the ring (\ref{homomorphicimageA3}) one has
    \begin{align*}
        f&=x_{13}^2y_{23}y_{31},\\
        c_{11}&=-x_{21}y_{12}+x_{13}y_{31},\\
        c_{22}&=x_{21}y_{12}+x_{23}y_{32}-x_{32}y_{23},\\
        c_{31}&=x_{32}y_{21}-x_{21}y_{32}-x_{11}y_{31},\\
        c_{13}&=-x_{23}y_{12}+x_{11}y_{13}.
    \end{align*}
    We first substitute $f$:
    \begin{align*}
    	&\ \ \ \ \ (x_{13}^{p^2-3})(y_{23}y_{31})^{p^2-2}(f)(c_{11}c_{22}c_{31}c_{13})^{p^2-1} \\
    	&=(x_{13}y_{23}y_{31})^{p^2-1}(c_{11}c_{22}c_{31}c_{13})^{p^2-1}
    \end{align*}
    Since we already have the factor $(x_{13}y_{23}y_{31})^{p^2-1}$, any additional monomial term that contains one of $x_{13}, y_{23},$ and $y_{31}$ may be ignored. Substituting $c_{11}, c_{22}, c_{31},$ and $c_{13}$ with this observation, one has
    \begin{align*}
    	&\ \ \ \ \  \!\begin{multlined}[t][.95\displaywidth]
    		(x_{13}y_{23}y_{31})^{p^2-1}\big( -x_{21}y_{12}\big)^{p^2-1}
    			(x_{21}y_{12}+x_{23}y_{32})^{p^2-1}(x_{32}y_{21}-x_{21}y_{32})^{p^2-1}
    			\\(-x_{23}y_{12}+x_{11}y_{13})^{p^2-1}
    	\end{multlined}\\
    	&= \!\begin{multlined}[t][.95\displaywidth]
    		(x_{13}x_{21} y_{12}y_{23}y_{31})^{p^2-1}
    			(x_{21}y_{12}+x_{23}y_{32})^{p^2-1}(x_{32}y_{21}-x_{21}y_{32})^{p^2-1}
    			(-x_{23}y_{12}+x_{11}y_{13})^{p^2-1}
    	\end{multlined}.
    \end{align*}
    We have a new monomial factor $(x_{21}y_{12})^{p^2-1}$. Using similar arguments, we obtain
    \begin{align*}
    	&\ \ \ \ \  \!\begin{multlined}[t][.95\displaywidth]
    		(x_{13}x_{21} y_{12}y_{23}y_{31})^{p^2-1} \big(x_{23}y_{32}\big)^{p^2-1}\big(x_{32}y_{21}\big)^{p^2-1}
    			\big(x_{11}y_{13}\big)^{p^2-1}
    	\end{multlined}\\
    	&=(x_{11}x_{13}x_{21}x_{23}x_{32}y_{12}y_{13}y_{21}y_{23}y_{31}y_{32})^{p^2-1}
    \end{align*}
    which is nonzero in the ring (\ref{homomorphicimageA3}), as claimed.
\end{proof}

\begin{proposition}\label{thm:A4}
    The ring $A_4=\Bbbk[X,Y]/\mathfrak{c}$, where $X$ and $Y$ are $4\times 4$ matrices of indeterminates and $\mathfrak{c}$ denotes the ideal generated by the cross-diagonal of their commutator matrix, is $F$-regular.
\end{proposition}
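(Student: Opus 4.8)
The plan is to follow the same architecture as for $T$ and $A_3$: first establish that $A_4$ is a complete intersection, then exploit that it is Gorenstein and, by Theorem~\ref{Fsingularities}(4), $F$-pure, to reduce — via Theorem~\ref{thm:fregdeform} — the $F$-regularity of $A_4$ to that of a quotient by a well-chosen regular sequence, and finally settle that quotient by recognizing it as a polynomial extension of a tensor product of rings already known to be $F$-regular, most naturally copies of $T$ (Proposition~\ref{thm:T}) and/or $A_3$ (Proposition~\ref{thm:A3}); here one uses the standard fact that a tensor product over $\Bbbk$ of $F$-finite $F$-regular $\Bbbk$-algebras is again $F$-regular, $\Bbbk$ being algebraically closed of characteristic $p$.

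Step 1 (complete intersection). Since $\operatorname{tr}C=0$, take $c_{11},c_{22},c_{33},c_{14},c_{23},c_{32},c_{41}$ as generators of $\mathfrak{c}$. As $\dim\Bbbk[X,Y]=32$, it suffices to exhibit $25$ further homogeneous linear forms — a mixture of single coordinates $x_{ij},y_{ij}$ and ``difference'' forms $x_{ij}-y_{k\ell}$, in the spirit of (\ref{equ1}) and (\ref{set2}) — so that $\Bbbk[X,Y]$ modulo $\mathfrak{c}$ together with these $25$ forms is Artinian. Writing out the resulting $0$-dimensional quotient explicitly, as in the earlier proofs, shows the seven generators of $\mathfrak{c}$ form a homogeneous system of parameters, hence a regular sequence, so $A_4$ is a complete intersection, in particular Gorenstein.

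Step 2 (deformation to a tensor product). Because $A_4$ is Gorenstein and $F$-pure, Theorem~\ref{thm:fregdeform} reduces the claim to the $F$-regularity of $A_4$ modulo any regular sequence on it. The goal is to pick such a regular sequence of linear forms — again of the types ``$x_{ij}$'', ``$y_{ij}$'', ``$x_{ij}-y_{k\ell}$'' — so that the image of $\mathfrak{c}$ in the smaller polynomial ring partitions into groups of generators supported on pairwise disjoint sets of variables, each group defining a copy of $T$, of $A_3$, of a lower $A_i$, or of a polynomial ring. The ``corner block'' shape of the $4\times 4$ commutator, after the right specialization, is exactly what produces the relations $f_1,\dots,f_4$ of Proposition~\ref{thm:T}, which is why $T$ was isolated there. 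One then invokes $F$-regularity of each factor and of their tensor product, and lifts back to $A_4$ by applying Theorem~\ref{thm:fregdeform} once per linear form; the regular-sequence property is checked by the same $0$-dimensionality computation as in Step 1.

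Alternatively, one may bypass the decomposition and argue directly as for $A_3$: compute a maximal minor $f$ of the Jacobian of $(c_{11},c_{22},c_{33},c_{14},c_{23},c_{32},c_{41})$, verify $f\in A_4^\circ$ by extending $\{f\}\cup\{c_{ij}\}$ to a homogeneous system of parameters, and then, with $\omega$ the product of the seven generators and a suitable $q=p^e$ (one expects $e=2$, perhaps $3$), show $f\,\omega^{q-1}\notin\mathfrak{m}^{[q]}$ by passing to the quotient of $\Bbbk[X,Y]$ by roughly half the coordinates together with $\mathfrak{m}^{[q]}$, multiplying by a carefully chosen auxiliary monomial, and collapsing the expansion of $\omega^{q-1}$ to a single surviving monomial. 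The main obstacle, in either route, is combinatorial rather than conceptual: in the deformation route it is pinning down the precise linear forms that realize a clean tensor decomposition while still forming a regular sequence on $A_4$ — a too-aggressive specialization, such as full block-diagonalization, overshoots the codimension, since commuting-matrix loci are not complete intersections — while in the direct route it is choosing the auxiliary monomial and the specialization ideal so that every cross-term in $\omega^{q-1}$ cancels. Since Proposition~\ref{thm:T} is already in place, the deformation route is the natural one to pursue.
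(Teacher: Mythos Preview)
Your preferred deformation route misreads the role of Proposition~\ref{thm:T}. In the paper, $T$ is not used to handle $A_4$; rather, $A_3$ and $A_4$ are proved directly by Glassbrenner's criterion and serve as the base of an induction, while the tensor decomposition involving $T'$ (a direct summand of $T$) appears only in the inductive step for even $n=2k+2$ with $k\ge 2$, i.e.\ $n\ge 6$. If you attempt the analogous specialization $\Omega_2$ at $n=4$ (so $k=1$, killing the eight variables $x_{13},x_{23},x_{31},x_{32}$ and their $y$-counterparts), the images of $c_{23}$ and $c_{32}$ become $x_{24}y_{43}-x_{43}y_{24}$ and $x_{34}y_{42}-x_{42}y_{34}$, each mixing variables from the would-be $A_2$-block (row/column indices $\{3,4\}$) with variables from the would-be $T'$-block (indices $\{1,2,4\}$); likewise $c_{41}$ and $c_{14}$ retain terms in $x_{44},y_{44}$. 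The quotient is therefore not a tensor product, and there is no evident modification that repairs this without violating the codimension count. This is the concrete form of the obstacle you flag at the end, and it is precisely why $A_4$ is a base case rather than an inductive step.

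The paper instead follows your ``alternative'' route. It exhibits the explicit Jacobian minor
\[
f=-\det\frac{\partial(c_{11},c_{22},c_{33},c_{41},c_{32},c_{23},c_{14})}{\partial(x_{11},x_{22},x_{42},y_{11},y_{21},y_{22},y_{31})}=x_{12}x_{13}y_{24}(x_{14}y_{41}-x_{41}y_{14})(x_{23}y_{32}-x_{32}y_{23}),
\]
verifies $f\in A_4^\circ$ by extending $\{f,c_{11},\dots,c_{14}\}$ to a homogeneous system of parameters of $\Bbbk[X,Y]$, and then shows $f\,(c_{11}c_{22}c_{33}c_{41}c_{32}c_{23}c_{14})^{p-1}\notin\mathfrak{m}^{[p]}$ by specializing eleven coordinate variables to zero, multiplying by an auxiliary monomial of degree $7(p-2)$, and collapsing term by term to a single surviving monomial. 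Two small corrections to your sketch of this route: $q=p$ already suffices (your guess of $e=2$ or $3$ is unnecessarily pessimistic), and the complete-intersection property you propose to establish in Step~1 is already available from \cite[Theorem~3.4]{KY22} and need not be redone.
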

\begin{proof}
	The ideal $\mathfrak{c}$ is generated by $\{c_{11},c_{22},c_{33},c_{41},c_{32},c_{23},c_{14}\}$ where
	\begin{align*}
		c_{11}&=x_{12}y_{21} - x_{21}y_{12} + x_{13}y_{31} - x_{31}y_{13} + x_{14}y_{41} - x_{41}y_{14},\\
		c_{22}&=x_{21}y_{12} - x_{12}y_{21} + x_{23}y_{32} - x_{32}y_{23} + x_{24}y_{42} - x_{42}y_{24},\\
		c_{33}&=x_{31}y_{13} - x_{13}y_{31} - x_{23}y_{32} + x_{32}y_{23} + x_{34}y_{43} - x_{43}y_{34},\\
		c_{41}&=x_{41}y_{11} - x_{11}y_{41} - x_{21}y_{42} + x_{42}y_{21} - x_{31}y_{43} + x_{43}y_{31} - x_{41}y_{44} + x_{44}y_{41},\\
		c_{32}&=x_{31}y_{12} - x_{12}y_{31} - x_{22}y_{32} + x_{32}y_{22} - x_{32}y_{33} + x_{33}y_{32} + x_{34}y_{42} - x_{42}y_{34},\\
		c_{23}&=x_{21}y_{13} - x_{13}y_{21} + x_{22}y_{23} - x_{23}y_{22} + x_{23}y_{33} - x_{33}y_{23} + x_{24}y_{43} - x_{43}y_{24},\\
		c_{14}&=x_{11}y_{14} - x_{14}y_{11} + x_{12}y_{24} - x_{24}y_{12} + x_{13}y_{34} - x_{34}y_{13} + x_{14}y_{44} - x_{44}y_{14}.
	\end{align*}
	We observe that \[
 f\coloneqq x_{12}x_{13}y_{24}(x_{14}y_{41}-x_{41}y_{14})(x_{23}y_{32}-x_{32}y_{23})=- \det \frac{\partial(c_{11},c_{22},c_{33},c_{41},c_{32},c_{23},c_{14})}{\partial(x_{11},x_{22},x_{42},y_{11},y_{21},y_{22},y_{31})}\] is a maximal minor of the Jacobian matrix of the ideal defining $A_4$, and thus by Corollary ~\ref{cor:testelement} and the fact that $A_4$ is $F$-pure (Theorem \ref{Fsingularities}. (4)), $f$ is a test element once we show that it is in $A_4^\circ$. Observe that 
	the homomorphic image of $\Bbbk[X,Y]$ modulo the ideal generated by the $32=\dim \Bbbk[X,Y]$ elements
    \begin{equation}\label{set3}
         \begin{gathered}
		x_{31},\ x_{32},\  x_{33},\  x_{41},\  x_{44},\  y_{11},\  y_{13},\  y_{21},\  y_{22}, \ y_{33},\  y_{44},\\
		x_{12}-x_{13},\  x_{12}-x_{14}, \ x_{12}-x_{23},\\
		x_{11}-y_{14}, \ x_{12}-y_{24}, \ x_{12}-y_{32},\  x_{12}-y_{41},\\
		x_{21}-y_{12}, \ x_{22}-y_{23},\  x_{24}-y_{42},\\
		 x_{34}-y_{43},\  x_{42}-y_{34}, \ x_{43}-y_{31},\\
		 c_{11},\ c_{22},\ c_{33},\ c_{41},\ c_{32},\ c_{23},\ c_{14},\  f
	\end{gathered}
    \end{equation}
 is isomorphic to
	\[
	\frac{\Bbbk[x_{11},\ x_{12},\ x_{21},\ x_{22},\ x_{24},\ x_{34},\ x_{42},\ x_{43}]}{\begin{multlined}
			(x_{12}^2-x_{21}^2+x_{12}x_{43}, \ \ x_{12}^2+x_{21}^2+x_{24}^2-x_{12}x_{42},
			\ \ -x_{12}^2+x_{34}^2-x_{12}x_{43}-x_{42}x_{43},\\
			 -x_{11}x_{12}-x_{21}x_{24}+x_{43}^2,
			\ \ -x_{12}x_{22}+x_{24}x_{34}-x_{42}^2-x_{12}x_{43},\\
			x_{22}^2+x_{24}x_{34}-x_{12}x_{43},
			\ \ x_{11}^2+x_{12}^2-x_{21}x_{24}+x_{12}x_{42},
			\ \ x_{12}^7)
	\end{multlined}} 
	\]
	which is evidently of dimension $0$, i.e., the elements (\ref{set3}) form 
	a homogeneous system of parameters for $\Bbbk[X,Y]$, hence a regular sequence. In particular, $f$ is $A_4$-regular.  By Glassbrenner's criterion, to prove the $F$-regularity of $A_4$, it suffices to show that 
	\[ (f)(c_{11}c_{22}c_{33}c_{41}c_{32}c_{23}c_{14})^{p-1} \notin \mathfrak{m}^{[p]} \]
	where $\mathfrak{m}$ denotes the homogeneous maximal ideal of $\Bbbk[X,Y]$. We will prove something stronger, that the element
 \begin{equation}\label{elementA4}
     (x_{12}x_{13}x_{32}x_{41}y_{14}y_{23}y_{24})^{p-2}(f)(c_{11}c_{22}c_{33}c_{41}c_{32}c_{23}c_{14})^{p-1}
 \end{equation}
	where all elements, for the remainder of the proof, denote their homomorphic images in the ring
 \begin{equation}\label{homomorphicA4}
     \Bbbk[X,Y]/\big( (x_{11},x_{22},x_{24},x_{33},x_{44}, y_{31},y_{32},y_{33}, y_{41}, y_{43},y_{44})+\mathfrak{m}^{[p]}\big).
 \end{equation}
    We replace the polynomials with their corresponding homomorphic images:
    \begin{align*}
        f&=x_{12}x_{13}x_{32}x_{41}y_{14}y_{23}y_{24}\\
        c_{11}&=x_{12}y_{21} - x_{21}y_{12}  - x_{31}y_{13} - x_{41}y_{14},\\
		c_{22}&=x_{21}y_{12} - x_{12}y_{21} - x_{32}y_{23} - x_{42}y_{24},\\
		c_{33}&=x_{31}y_{13} + x_{32}y_{23}  - x_{43}y_{34},\\
		c_{41}&=x_{41}y_{11} - x_{21}y_{42} + x_{42}y_{21},\\
		c_{32}&=x_{31}y_{12} + x_{32}y_{22}  + x_{34}y_{42} - x_{42}y_{34},\\
		c_{23}&=x_{21}y_{13} - x_{13}y_{21}  - x_{23}y_{22} - x_{43}y_{24},\\
		c_{14}&=-x_{14}y_{11} + x_{12}y_{24} + x_{13}y_{34} - x_{34}y_{13}.
    \end{align*}
    For degree reasons, the element $(\ref{elementA4})$ equals  a scalar multiple of  $$(x_{12}x_{13}x_{14}x_{21}x_{23}x_{31}x_{32}x_{34}x_{41}x_{42}x_{43}y_{11}y_{12}y_{13}y_{14}y_{21}y_{22}y_{23}y_{24}y_{34}y_{42})^{p-1}.$$
    We first substitute $f$ in the element $(\ref{elementA4})$:
	\begin{align*}
		&\ \ \ \ \  (x_{12}x_{13}x_{32}x_{41}y_{14}y_{23}y_{24})^{p-2}(f)(c_{11}c_{22}c_{33}c_{41}c_{32}c_{23}c_{14})^{p-1}\\
		&=(x_{12}x_{13}x_{32}x_{41}y_{14}y_{23}y_{24})^{p-1}(c_{11}c_{22}c_{33}c_{41}c_{32}c_{23}c_{14})^{p-1}.
        \end{align*}
    Since we already have the factor $(x_{12}x_{13}x_{32}x_{41}y_{14}y_{23}y_{24})^{p-1}$, any additional monomial term that contains one of these variables may be ignored. Substituting $c_{22}$ with this observation, we obtain 
        \begin{align*}
		&\ \ \ \ \ (x_{12}x_{13}x_{32}x_{41}y_{14}y_{23}y_{24})^{p-1}\big(c_{11}(x_{21}y_{12})c_{33}c_{41}c_{32}c_{23}c_{14}\big)^{p-1} \\
		&=(x_{12}x_{13}x_{21}x_{32}x_{41}y_{12}y_{14}y_{23}y_{24})^{p-1}(c_{11}c_{33}c_{41}c_{32}c_{23}c_{14})^{p-1}
        \end{align*}
    We have a new monomial factor $(x_{21}y_{12})^{p-1}$. Using similar arguments for $c_{11}, c_{41},$ and $c_{23}$, the above element equals
        \begin{align*}
		&\ \ \ \ \ \!\begin{multlined}[t][.95\displaywidth]
			(x_{12}x_{13}x_{21}x_{32}x_{41}y_{12}y_{14}y_{23}y_{24})^{p-1}\big((-x_{31}y_{13})c_{33}(x_{42}y_{21})c_{32}(-x_{23}y_{22})c_{14}\big)^{p-1}
		\end{multlined}\\
		&=\!\begin{multlined}[t][.95\displaywidth]
			(x_{12}x_{13}x_{21}x_{23}x_{31}x_{32}x_{41}x_{42}y_{12}y_{13}y_{14}y_{21}y_{22}y_{23}y_{24})^{p-1}\big(c_{33}c_{32}c_{14}\big)^{p-1}
		\end{multlined}
        \end{align*}
    With the new monomial factor $(x_{23}x_{31}x_{42}y_{13}y_{21}y_{22})^{p-1}$, we simplify $c_{33},c_{32},$ and $ c_{14}$:
        \begin{align*}
		&\ \ \ \ \ \!\begin{multlined}[t][.95\displaywidth]
			(x_{12}x_{13}x_{21}x_{23}x_{31}x_{32}x_{41}x_{42}y_{12}y_{13}y_{14}y_{21}y_{22}y_{23}y_{24})^{p-1}\big((-x_{43}y_{34})(x_{34}y_{42})(-x_{14}y_{11})\big)^{p-1}
		\end{multlined}\\
		&=(x_{12}x_{13}x_{14}x_{21}x_{23}x_{31}x_{32}x_{34}x_{41}x_{42}x_{43}y_{11}y_{12}y_{13}y_{14}y_{21}y_{22}y_{23}y_{24}y_{34}y_{42})^{p-1},
	\end{align*}
        which is nonzero in the ring $(\ref{homomorphicA4})$, as claimed.
\end{proof}

\begin{proof}[Proof of Theorem \ref{thm:main}]
    Recall that $A_n=\Bbbk[X,Y]/\mathfrak{c}$, where $X$ and $Y$ are $n\times n$ matrices of indeterminates and $\mathfrak{c}$ denotes the ideal generated by the cross-diagonal of their commutator matrix. The rings $A_1$ and $A_2$ are $F$-regular \cite[pages 201-202]{Ka18}. We now assume $n\geq 3$. It is known that $A_n$ is an $F$-pure complete intersection \cite[Theorem 3.4, Theorem 3.7, Theorem 3.9]{KY22}. In particular, we have
    \[\dim\ A_{2k-1} = 8k^2-12k+6\]
    and 
    \[\dim\ A_{2k}= 8k^2-4k+1\]
    where $k\geq 2$.   We will show that the rings $A_n$ are $F$-regular by induction on $n$.  We already proved the base cases that $A_3$ and $A_4$ are $F$-regular in Propositions \ref{thm:A3} and \ref{thm:A4}. Assume that $A_m$ is $F$-regular for any $m<n$. We will prove that $A_n$ is $F$-regular. If $n=2k+1$ for some $k\geq 2$, set
    \[
    \Omega_1\coloneqq \{x_{ij},y_{ij} \mid (i,j)=(k+1,l) \text{ or }(l,k+1) \text{ where } 1\leq l < n \}
    \]
    We have
    \begin{multline*}
    	\frac{A_{n}}{(\Omega_1)}\cong \frac{\Bbbk[X',Y']}{\mathfrak{c}(X'Y'-Y'X')} \otimes_K \frac{K\begin{bmatrix}
    			x_{n,k+1} & x_{k+1,n}\\
    			y_{n,k+1} & y_{k+1,n}
    	\end{bmatrix}}{(x_{n,k+1}y_{k+1,n}- x_{k+1,n}y_{n,k+1} ) }
    	\cong\ \  A_{n-1} \otimes_K \frac{\Bbbk[w_1,\ w_2, \ w_3, \ w_4]}{(w_1w_4-w_2w_3) },
    \end{multline*}
	where $X'$ and $Y'$ are obtained from $X$ and $Y$, respectively, by deleting the $(k+1)^{\text{th}}$ row and column, and $w_1,w_2,w_3,w_4$ are new indeterminates. Since determinantal rings are $F$-regular \cite[Theorem 7.14]{HH94b}, the ring on the right-hand side is $F$-regular due to \cite[Theorem 7.45]{HH94a} and the induction hypothesis, and thus so is $A_{n}/(\Omega_1)$. To show that $A_{n}$ is $F$-regular, it suffices to show that $\Omega_1$ forms a regular sequence on $A_{n}$ by Theorem \ref{thm:fregdeform}. Since $A_n$ is a complete intersection, it suffices to show that the dimension of $A_n$ drops by exactly $|\Omega_1|$ after taking modulo by the ideal generated by $\Omega_1$. Indeed, we have
    \begin{align*}
        \dim\ A_n/(\Omega_1)+ |\Omega_1| &= \dim\ A_{n-1} + \dim \frac{\Bbbk[w_1,\ w_2, \ w_3, \ w_4]}{(w_1w_4-w_2w_3) } + 2(4k-1) \\
        &= \dim\ A_{2k} + 3 + 2(4k-1)\\
        &= (8k^2-4k+1) +3 + 2(4k-1) \\
        &=\dim \ A_{2k+1}\\
        &=\dim\ A_n.
    \end{align*}
    Now suppose $n=2k+2$ for some $k\geq 2$. Set
	\[\Omega_2\coloneqq \{x_{ij},y_{ij}\mid (i,j)=(k,l), (k+1,l), (l,k) \text{ or } (l,k+1) \text{ where } 1\leq l<k \text{ or } k+1<l< n\}. \]
	We have
	\begin{equation*}
		\frac{A_{n}}{(\Omega_2)} \cong \!\begin{multlined}[t][.9\displaywidth]
			\frac{\Bbbk[X'',Y'']}{\mathfrak{c}(X''Y''-Y''X'')}\otimes_K 
			T'
		\end{multlined}		\cong
\ A_{n-2}\otimes_K T'
	\end{equation*}
	where $X''$ and $Y''$ are obtained from $X$ and $Y$, respectively, by deleting the $k^{\text{th}}$ and $(k+1)^{\text{th}}$ rows and columns and 
	\begin{multline*}
		T'\coloneqq \frac{K\begin{bmatrix}
				x_{k,k}& x_{k,k+1}& x_{k,n}\\
				x_{k+1,k}& x_{k+1,k+1}& x_{k+1,n}\\
				x_{n,k}& x_{n,k+1}& 0\\
			\end{bmatrix} \begin{bmatrix}
				y_{k,k}& y_{k,k+1}& y_{k,n}\\
				y_{k+1,k}& y_{k+1,k+1}& y_{k+1,n}\\
				y_{n,k}& y_{n,k+1}& 0\\
		\end{bmatrix}}{\begin{multlined}
				\big(x_{k+1,n}y_{n,k+1} - x_{n,k+1}y_{k+1,n} + x_{k+1,k}y_{k,k+1} - x_{k,k+1}y_{k+1,k},\\
				x_{k+1,n}y_{n,k} - x_{n,k}y_{k+1,n} + x_{k+1,k+1}y_{k+1,k} - x_{k+1,k}y_{k+1,k+1} + x_{k+1,k}y_{k,k} - x_{k,k}y_{k+1,k},\\
				x_{k,n}y_{n,k+1} - x_{n,k+1}y_{k,n} - x_{k+1,k+1)}y_{k,k+1} + x_{k,k+1}y_{k+1,k+1} - x_{k,k+1}y_{k,k} + x_{k,k}y_{k,k+1},\\
				x_{k,n}y_{n,k} - x_{n,k}y_{k,n} - x_{k+1,k}y_{k,k+1} + x_{k,k+1}y_{k,n}\big) 
		\end{multlined}}.
	\end{multline*}
	Since $T'[w_{33},z_{33}]=T$ where $T$ denotes the ring in Proposition \ref{thm:T}, the ring $T'$ is a direct summand of $T$, an $F$-regular ring. Thus $T'$ is $F$-regular by \cite[Proposition 4.12]{HH90} and hence so is $A_n/(\Omega_2)$ due to \cite[Theorem 7.45]{HH94a} and the induction hypothesis. By arguments similar to the previous case, one can show that the elements in $\Omega_2$ form a regular sequence on $A_n$, and thus $A_n$ is $F$-regular by Theorem \ref{thm:fregdeform}.
	
	Regarding $B_n=\Bbbk[X,Y]/\mathfrak{a}$, where $X$ and $Y$ are $n\times n$ matrices of indeterminates, it is known that $B_1$ is $F$-regular and  $B_2$ is not even a domain, let alone $F$-regular \cite[page ~202]{Ka18}. For $n\geq 3$, since $A_n$ is a complete intersection \cite[Theorem 3.4]{KY22}, it is isomorphic to $B_n$ modulo a regular sequence. Since $A_n$ is $F$-regular, so is $B_n$ for any $n\geq 3$ by Theorem ~\ref{thm:fregdeform}.
\end{proof}

\section*{Acknowledgement} The author was supported by the NSF grants DMS 1801285, 2101671, and 2001368. The author would like to thank his advisor Anurag Singh for suggesting this problem and for the constant encouragement and helpful discussions. The author would like to thank his advisor Srikanth Iyengar for carefully reading the many versions of this paper.
\bibliographystyle{amsplain}
\bibliography{refs}
\end{document}